\newcommand{\sO}{\mathscr{O}}
\newtheoremstyle{quest}{\topsep}{\topsep}{}{}{\bfseries}{}{ }{\thmname{#1}\thmnote{ #3}.}
\newcommand{\Hom}{\mathrm{Hom}}
\newtheorem{thm}{\textbf{Theorem}}
\newtheorem{cor}[thm]{\textbf{Corollary}}
\newtheorem{lem}[thm]{\textbf{Lemma}}
\newtheorem{pro}[thm]{\textbf{Proposition}}
\newcommand{\comod}{\ensuremath{\mathrm{Comod}}}
\newcommand{\Comod}{\ensuremath{\mathrm{Comod}}}
\newcommand{\op}{\ensuremath{\mathrm{op}}}
\newcommand{\QC}{\ensuremath{\mathrm{QCoh}}}
\newcommand{\DMO}[2]{\DeclareMathOperator{#1}{#2}}
\DMO{\spec}{Spec\text{ }}
\DMO{\Frac}{Frac\text{ }}
\DMO{\cok}{coker}
\DMO{\tr}{tr}
\DMO{\rk}{rk}
\DMO{\img}{im}
\DMO{\kdim}{Krull\text{ }dim}
\renewcommand{\phi}{\varphi}
\DMO{\grad}{\mathrm{grad}}
\DMO{\Spec}{\mathrm{Spec}}
\DMO{\Spet}{\mathrm{Sp\'et}}
\renewcommand{\mod}{\ensuremath{\mathrm{Mod}}}
\newcommand{\Mod}{\ensuremath{\mathrm{Mod}}}
\theoremstyle{definition}
\newtheorem{df}[thm]{\textbf{Definition}}
\newtheorem{exa}[thm]{\textbf{Example}}
\newtheorem{rmk}[thm]{\textbf{Remark}}
\theoremstyle{quest}
\begin{document}
\setlength{\parindent}{15pt}
\setlength{\ULdepth}{1pt}



\title{Algebraic Models for Quasi-Coherent Sheaves in Spectral Algebraic Geometry}

\author{Adam A. Pratt}




\maketitle
\abstract{In this paper we prove the existence of an algebraic model for quasi-coherent sheaves on certain non-connective geometric stacks arising in stable homotopy theory and spectral algebraic geometry using the machinery of adapted homology theories.}

\setcounter{section}{0}
\tableofcontents


\section{Introduction}
\subsection{Summary}
In this paper we prove the existence of an algebraic model for quasi-coherent sheaves on certain non-connective geometric stacks arising in stable homotopy theory and spectral algebraic geometry using the machinery of adapted homology theories. 

In the second section, we give an introduction to the work and necessary background information, beginning with an overview of the problem and a review of the existing literature. In Section \ref{sag} we review background on spectral algebraic geometry, particularly with regard to non-connective geometric spectral stacks. Section \ref{adapt} provides background on adapted homology theories. Section \ref{algthm} uses the machinery described in the previous section to state the algebraicity theorem used in our main results.

The main results of the paper are proved in Section \ref{maincont}. Section \ref{adaptnonconnect} describes the construction of the adapted homology theory used in our model, exploiting the close relationship between non-connective geometric spectral stacks and commutative Hopf algebroids. Section \ref{algmod} then applies the algebraicity theorem to construct the algebraic model, giving explicit conditions on a non-connective geometric spectral stack that imply the existence of such a model.

Finally, Section \ref{exa} describes two prominent examples that apply our theorem. Section \ref{proj} gives conditions for which our theorem applies to the flat projective spaces in spectral algebraic geometry over a base $\mathbb{E}_\infty$-ring. Section \ref{chrom} applies our theorem to certain truncations of the $p$-local moduli stack of oriented formal groups, giving a new algebro-geometric proof of the chromatic algebraicity theorem which gives an algebraic model for chromatic localizations of the category of spectra at large primes.
\subsection{Acknowlegements}
First and foremost, I would like to thank my advisor Brooke Shipley. I owe thanks to many other people for helpful conversations about this thesis, including Greg Taylor, Ethan Devinatz, Irakli Patchkoria, Anish Chedalavada, David Gepner, Jack Burke, and  Maximilien Holmberg-P\'eroux.

\section{Background}

Computations in algebra are amenable to machine computation, while computations in topology remain largely inaccessible. For this reason, algebraic topologists study algebraic \textit{approximations} of categories of topological spaces. The most powerful of these approximations are cohomology theories, which assign to each space algebraic objects that are used to study space. The data of a cohomology theory can be packaged into a single object known as a spectrum. Spectra are the main objects of study in stable homotopy theory: they act more like algebraic objects than spaces do, but they still contain much of the data of the category of spaces. The question remains, however: just how \textit{algebraic} are spectra?

A stable $\infty$-category $\mathcal{C}$ is said to \textit{admit an algebraic model} if there exists an abelian category $\mathcal{A}$ and an equivalence $h\mathcal{C}\simeq hD(\mathcal{A})$ between the homotopy categories of $\mathcal{C}$ and the derived category of $\mathcal{A}$, respectively. An important heuristic in stable homotopy theory, the ``Mahowald Uncertainty Principle" states that any approximation to the stable homotopy groups of spheres (a central object of study in stable homotopy theory) which starts with homological algebra must be infinitely far from the solution \cite{pst1}. More concretely, Schwede proved that the category of spectra does not admit an algebraic model \cite{Schwede}. However, this theme is not common in stable homotopy: stable $\infty$-categories admitting algebraic models abound.

Work of Quillen showed that rational homotopy theory is algebraic \cite{Quillen}. Rational homotopy \textit{localizes} spaces, forcing all algebraic objects used to study spaces to become vector spaces over the rational numbers by requiring that singular homology with rational coefficients detects equivalences. Since this foundational result, there has been significant progress in identifying exactly which stable $\infty$-categories admit algebraic models. Some notable examples include the categories of modules over Eilenberg-Maclane spectra $HR$ for $R$ a commutative ring, 
 and spectra localized with respect to complex topological $K$-theory at odd primes \cite{bousfield}.
 
 The theory of commutative rings in stable homotopy theory, that is, \textit{ring spectra}, has seen significant developments in the past few decades \cite{ekmm,hss,HA}. Much of the classical theory of commutative rings can be generalized to this setting, where commutative rings are replaced with ring spectra that are commutative only up to an infinite chain of higher homotopies, known as \textit{$\mathbb E_\infty$-ring spectra}. It is then natural to ask if we can similarly generalize the theory of algebraic geometry over commutative rings to this setting: this field of study is known as derived algebraic geometry. Derived algebraic geometry studies geometric objects (often called \textit{derived stacks}) whose affine objects are formal duals of objects arising in homological algebra and stable homotopy theory, such as simplicial commutative rings, commutative differential-graded algebras, or $\mathbb{E}_\infty$-ring spectra. It is worth noting that each of these approaches to derived algebraic geometry has its own strengths and weaknesses, and that they are all equivalent for connective objects over a field of characteristic 0. However, the approach based on $\mathbb{E}_\infty$-ring spectra as laid out in \cite{SAG} is the most general approach, and is typically the most useful when the theory is applied to problems in stable homotopy theory. In derived algebraic geometry, many objects of a homological nature typically studied in ordinary algebraic geometry, especially various derived functors, become much more natural. For example, in the derived setting, the global sections functor (defined in a straightforward manner) captures all of the information that would typically be given by sheaf cohomology in the non-derived setting. The theory of derived algebraic geometry has many applications throughout homotopy theory, ordinary algebraic geometry, and arithmetic.
 
This thesis approaches derived algebraic geometry through the lens of algebraic models, examining when certain stable $\infty$-categories arising naturally in derived algebraic geometry, namely the category of quasi-coherent sheaves on a derived stack, admit algebraic models. We prove a theorem that gives specific criteria ensuring such categories admit algebraic models, generalizing a theorem of Patchkoria-Pstragowski in the affine setting \cite{pp21}. We also study specific examples of derived stacks satisfying the hypotheses of the theorem.

\begin{rmk}
    Note that throughout the paper, we will implicitly assume that all categories and categorical constructions (such as functors or limits) are $\infty$-categorical in nature (unless otherwise stated). We follow the framework constructed in \cite{HTT}. 
\end{rmk}

\subsection{Spectral algebraic geometry}\label{sag}
Spectral algebraic geometry is a generalization of classical algebraic geometry in which the spaces (or, more generally, \textit{$\infty$-topoi}) of study are locally modeled on $\mathbb{E}_\infty$-ring spectra (rather than commutative rings in the classical setting). While there are many classifications of such space-like structures, in this work we concern ourselves with a particular class of objects known as nonconnective geometric spectral stacks. In order to define such objects, we need to first recall a few preliminary definitions.
\begin{df}[{Faithful flatness, \cite[Definition 2.39]{Ma18}}]
    Recall that a map of $\mathbb{E}_\infty$-ring spectra $A\to B$ is called \textit{faithfully flat} if the induced map $\pi_0A\to \pi_0B$ is a faithfully flat map of commutative rings and the map \[\pi_*A\otimes_{\pi_0A}\pi_0B\to \pi_*B\] is an isomorphism.
\end{df}
\begin{rmk}
    The above definition allows us to define the \textit{fpqc} topology on the category of $\mathbb{E}_\infty$-ring spectra $\mathrm{CAlg}$, where covers are given by faithfully flat maps \cite[B.6.1.3]{SAG}.
\end{rmk}

We can now define our main geometric objects of study:

\begin{df}\label{ncgeo} Following \cite[Definition 1.3.1]{rok1}, we define a \textit{nonconnective geometric spectral stack} as a functor $\mathfrak X:\mathrm{CAlg}\to \mathcal S$ from the category of $\mathbb{E}_\infty$-ring spectra to the category of spaces satisfying:
    \begin{enumerate}
        \item The functor $\mathfrak X$ satisfies descent for the fpqc topology.
        \item The diagonal map $\mathfrak X\xrightarrow{\Delta} \mathfrak X\times \mathfrak X$ is affine.
        \item There exists a faithfully flat affine cover $\Spec(A)\to \mathfrak X$ for some $A\in\mathrm{CAlg}$.
    \end{enumerate}
\end{df}
\begin{rmk}
    Note that the affine cover $\Spec(A)\to \mathfrak X$ for a given nonconnective geometric spectral stack $\mathfrak X$ can be extended via the \v{C}ech complex to a simplicial presentation (i.e. the geometric realization of a groupoid object) \[|\Spec(A^\bullet)|\simeq \mathfrak X;\] so, since every nonconnective geometric spectral stack admits such a cover, they also admit such a simplicial presentation \cite[1.3.5]{rok1}.
\end{rmk}
\begin{df}[Quasi-coherent sheaves]
    The functor \[\mathrm{QCoh}:(\mathrm{Shv}^\mathrm{nc}_\mathrm{fpqc})^\op\to \mathrm{CAlg}(\mathrm{Pr}^L)\] from the opposite category of nonconnective fpqc sheaves to the category of presentably symmetric monoidal infinity categories is defined by right Kan extension along the map $A\mapsto \mod_A$; that is, given a simplicial presentation $\varinjlim_i\Spec(A_i)\simeq \mathfrak X$, we have \[\mathrm{QCoh}(\mathfrak X)\simeq \QC(\varinjlim_i\Spec(A_i)) \simeq \varprojlim_i\mod_{A_i}.\] The resulting symmetric monoidal $\infty$-category $\QC(\mathfrak{X})$ is known as the category of \textit{quasi-coherent sheaves} on $\mathfrak{X}$ \cite[1.4.1]{rok1}.
\end{df}

\begin{exa}
    Many classical stacks appearing in stable homotopy have spectral analogues in the form of nonconnective geometric spectral stacks, such as the moduli stack of formal groups \cite{rok1} and the moduli stack of elliptic curves (this is the content of the Goerss-Hopkins-Miller-Lurie theorem) \cite{lurie}.
\end{exa}

\subsection{Adapted homology theories}\label{adapt}

In this section we describe the functors which allow us to relate our stable $\infty$-categories to certain abelian categories, giving us a starting point for the construction of our algebraic models. Such functors are known as \textit{homology theories}. We will start with a slightly more general definition.

\begin{rmk}
    While many of the definitions in this section are classical in nature, our main source for this material is \cite{pp21}, as that work synthsizes all of these classical work (along with new theory) to lay the modern foundations for the theory of algebraic models we will employ.
\end{rmk}

\begin{df} A functor $H$ from a stable $\infty$-category $\mathcal C$ to an abelian category $\mathcal A$ is called \textit{homological} if it is additive and takes cofiber sequences in $\mathcal C$ to exact sequences in $\mathcal A$.
\end{df}

To upgrade a homological functor to a homology theory as needed, we will need additional structure on the source and target of the functor, which we will now describe.

\begin{df} Given an $\infty$-category $\mathcal C$, a \textit{local grading} on $\mathcal C$ is an auto-equivalence \[[1]:\mathcal C\to \mathcal C.\] A pair $(\mathcal C, [1])$ of an $\infty$-category $\mathcal C$ with a choice of local grading $[1]$ is then called a \textit{locally graded $\infty$-category}.

A functor $F:\mathcal C\to \mathcal D$ between two locally graded $\infty$-categories is a \textit{functor of locally graded $\infty$-categories} if it respects the local grading; that is, we have an equivalence \[F\left(X[1]_\mathcal{C}\right)\simeq F(X)[1]_\mathcal{D}\]for all $X\in \mathcal C$.
\end{df}

\begin{exa}
    Every stable $\infty$-category $\mathcal C$ has a local grading given by the suspension functor \[\Sigma:\mathcal C\to\mathcal C.\] For the remainder of this thesis, when working with a stable $\infty$-category, we will implicitly assume it is locally graded in this manner.
\end{exa}

\begin{df}
A functor $H:\mathcal C\to A$ of locally graded $\infty$-categories is called a \textit{homology theory} if its underlying functor is homological.
\end{df}

In order for our homology theories to be useful in constructing algebraic models, they need to interact nicely with injective resolutions in the target, allowing us to lift such resolutions to an analogous structure in the source.

\begin{df}For $H:\mathcal C\to \mathcal A$ a homology theory and $i\in \mathcal A$ an injective object, we say an object $i_\mathcal{C}\in \mathcal C$ representing the functor \[ \Hom_\mathcal{A}(H(-),i):\mathcal C^\op \to \mathsf{Ab} \] in the homotopy category $h\mathcal C$ is an \textit{injective lift for $i$}.
\end{df}

From representability, we obtain an isomorphism \[[i_\mathcal{C},i_\mathcal{C}]\xrightarrow{\cong} \Hom_\mathcal{A}(H(i_\mathcal{C}),i);\] the image of the identity under this map supplies us with a map 
\begin{equation}\label{tau}
    H(i_\mathcal{C})\to i.
\end{equation}

The following condition on a homology theory will ensure the existence of an Adams spectral sequence on the source, which is a necessary ingredient in the construction of an algebraic model by way of the algebraicity theorem.

\begin{df} A homology theory $H:\mathcal C\to \mathcal A$ is \textit{adapted} if
\begin{enumerate}
\item $\mathcal A$ has enough injectives,
\item Any injective $i\in\mathcal A$ admits an injective lift $i_\mathcal{C}\in\mathcal C$, and
\item The structure morphism (\ref{tau}) is an isomorphism for any $i$.
\end{enumerate}
\end{df}

\begin{exa}\label{adex}
    For $R$ an $\mathbb{E}_1$-ring spectrum, the homotopy functor \[\pi_*:\mod_R\to\mod_{R_*}\] is an adapted homology theory \cite[6.53]{pp21}.
\end{exa}

\begin{exa}\label{chex}
    Let $R$ be an \textit{Adams-type} ring spectrum, as defined in \cite{pst1}. Then the comodule-valued $R$-homology functor \[R_*:\mathrm{Sp}\to \Comod_{R_*R}\] is an adapted homology theory as a consequence of a theorem of Devinatz \cite[1.5]{dev}.
\end{exa}

\subsection{The algebraicity theorem}\label{algthm}
In this section, we introduce the algebraicity theorem (Theorem \ref{franke}, \cite{pp21}) that we will use in the construction of our algebraic model. We will begin by recalling the definition of the periodic derived category of an abelian category; such $\infty$-categories will be the target of the algebraic models constructed. All of the definitions in this section are classical in nature; however, our main reference is \cite{pp21}.

\begin{df}
    Let $\mathcal A$ be a locally graded abelian category. A \textit{differential object} in $\mathcal A$ is a pair $(M, d)$, where $M\in \mathcal A$ and $d:M\to M$ is a differential; that is, $d[1]\circ d = 0$. A morphism of differential objects is a morphism in $\mathcal A$ that commutes with the differentials.
\end{df}

\begin{df}
If $(M,d)$ is a differential object, then its \textit{homology} is defined to be \[H(M,d):=\ker(d)/\img(d[-1]).\]
A map of differential objects is said to be a \textit{quasi-isomorphism} if it induces an isomorphism on homology.
\end{df}
The category of differential objects is abelian, and we can endow it with a model structure where the weak equivalences are the quasi-isomorphisms and the cofibrations are the monomorphisms (provided $\mathcal A$ has enough injectives and is of finite cohomological dimension, which will always be the case for us).
\begin{df}
    The \textit{periodic derived category} $ D^{\mathrm{per}}(\mathcal A)$ of $\mathcal A$ is the $\infty$-categorical localization of the above model structure on differential objects at the class of quasi-isomorphisms.
\end{df}

\begin{rmk}
    It is worth noting that much of the literature on algebraic models (including Franke's original paper) is written in terms of \textit{twisted chain complexes}, that is, chain complexes admitting a certain periodicity isomorphism between shifts of the complex. The category of such objects admits a model structure Quillen equivalent to the one described above for differential objects; we choose to follow the convention of \cite{pp21} in the use of differential objects (rather than twisted chain complexes) in order to more easily apply their presentation of the algebraicity theorem.
\end{rmk}

We now recall a few more definitions which are necessary in the statement of the theorem and will be used again throughout this work.

\begin{df}
A locally graded abelian category $\mathcal A$ is said to \textit{admit a splitting of order} $n$ if it can be decomposed as a product $\prod_\phi \mathcal A_\phi$ of Serre subcategories $\mathcal A_\phi$ indexed by $\phi\in \mathbb{Z}/n\mathbb{Z}$ such that $[1]\mathcal A_\phi\subseteq \mathcal A_{\phi+1}$.
\end{df}

\begin{df}
We say that an abelian category $\mathcal A$ has \textit{finite cohomological dimension} $d$ if $d$ is the smallest natural number such that for all $X,Y\in \mathcal A$ and $n>d$, the groups $\mathrm{Ext}^n_{\mathcal A}(X,Y)$ vanish.
\end{df}

We now recall the statement of the algebraicity theorem, as proved in \cite{pp21}:

\begin{thm}[Franke's algebraicity conjecture, \cite{pp21}]\label{franke} Suppose that $H:\mathcal C \to \mathcal A$ is a conservative, adapted homology theory, and assume that \begin{enumerate}
    \item $\mathcal A$ admits a splitting of order $q+1$,
    \item $\mathcal A$ is of finite cohomological dimension $d$, and
    \item $q\geq d$.
\end{enumerate}
Then, we have an equivalence of homotopy $(q+1-d)$-categories \[ h_{q+1-d}D^\mathrm{per}(\mathcal A) \simeq h_{q+1-d}\mathcal C. \]
\end{thm}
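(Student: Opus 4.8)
The plan is to follow the deformation-theoretic strategy of \cite{pp21}: realize both $\mathcal C$ and $D^{\mathrm{per}}(\mathcal A)$ as outputs of a single Adams-type construction built from $H$, and then use the order-$(q{+}1)$ splitting together with finite cohomological dimension to force the resulting comparison to be an equivalence up to the $(q{+}1{-}d)$-th categorical level.

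First I would use adaptedness to set up the $H$-based Adams resolution in $\mathcal C$. Condition (2) supplies, for every $X\in\mathcal C$, a tower resolving $X$ by injective lifts $i_{\mathcal C}$, and condition (3) (the structure maps (\ref{tau}) are isomorphisms) together with condition (1) guarantees that applying $H$ turns such a tower into an injective resolution of $H(X)$ in $\mathcal A$. This produces an Adams spectral sequence with $E_2$-page $\mathrm{Ext}^{s,t}_{\mathcal A}(H(X),H(Y))$ converging to $[X,Y[t-s]]_{\mathcal C}$. Since $\mathcal A$ has cohomological dimension $d$, the $E_2$-page is concentrated in filtrations $0\le s\le d$, so the spectral sequence degenerates at a finite page, converges strongly, and the Adams filtration on each graded mapping group has length $\le d$; conservativity of $H$ rules out any loss from $H$-completion, so this genuinely records the $d$-truncated Postnikov data of $\mathcal C$.

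Next I would run the parallel construction on the algebraic side. The periodic derived category $D^{\mathrm{per}}(\mathcal A)$ carries a filtration-by-injective-resolutions whose spectral sequence again has $E_2$-page $\mathrm{Ext}^{*,*}_{\mathcal A}$ and the same horizontal vanishing line at $s=d$; this is essentially formal, as $D^{\mathrm{per}}(\mathcal A)$ is assembled from $\mathcal A$. The structural input that turns this coincidence of spectral sequences into an actual functor is the splitting $\mathcal A\simeq\prod_{\phi\in\mathbb Z/(q+1)}\mathcal A_\phi$ with $[1]\mathcal A_\phi\subseteq\mathcal A_{\phi+1}$: it presents the local grading as a finite cyclic shift, which is exactly what is needed to encode a differential object of $\mathcal A$ as --- modulo Adams filtration --- an object of $\mathcal C$, i.e.\ to build Franke's reconstruction functor (or its inverse). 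Because the Adams filtration has length $\le d$ while the period is $q+1\ge d+1$, a differential object wraps around the cyclic grading at most once, so the reconstruction is unambiguous at the level of $h_{q+1-d}$.

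Finally I would check that the reconstruction functor induces an equivalence of homotopy $(q{+}1{-}d)$-categories by an obstruction-theoretic induction over categorical degree: on objects and low-degree morphisms it is an equivalence because both sides carry identical $E_2$-data, and the obstructions to extending the identification through $k$-morphisms lie in $\mathrm{Ext}$-groups that vanish --- by the interplay of $\mathrm{cd}(\mathcal A)=d$ with the order-$(q{+}1)$ periodicity --- precisely in the range $k\le q+1-d$. I expect this last step to be the main obstacle: making the reconstruction functor precise, getting the bookkeeping of the cyclic grading against the Adams filtration exactly right, and accepting that the bound $q+1-d$ is sharp (it fails to improve in general --- witness the $K(1)$-local category at $p=2$), so the argument must be arranged to extract exactly this range rather than more.
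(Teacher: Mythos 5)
First, note that the paper does not prove this statement at all: Theorem \ref{franke} is imported verbatim from \cite{pp21} (Theorem 7.5.6 there), so your attempt has to be measured against the Patchkoria--Pstragowski proof rather than anything in this paper. Against that benchmark there is a genuine gap. Your sketch correctly assembles the standard ingredients --- adaptedness gives $H$-Adams resolutions and a spectral sequence with $E_2 = \mathrm{Ext}^{s,t}_{\mathcal A}(H(X),H(Y))$, cohomological dimension $d$ gives the horizontal vanishing line, and the order-$(q+1)$ splitting is what lets a differential object interact with the local grading --- but the step that carries all the content, namely ``build Franke's reconstruction functor and check by an obstruction-theoretic induction that it induces an equivalence of homotopy $(q+1-d)$-categories,'' is stated rather than proved. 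You never say what category the reconstruction functor is defined on, how the cyclic splitting is used to rigidify a differential object into a diagram in $\mathcal C$, what the obstruction groups for extending the comparison actually are, or why they vanish in exactly the range claimed; asserting that obstructions ``lie in $\mathrm{Ext}$-groups that vanish precisely in the range $k\le q+1-d$'' is a restatement of the theorem, not an argument for it. There is also a smaller soft spot: conservativity of $H$ by itself does not give convergence or completeness of the Adams tower; one needs the vanishing line to conclude that the $H$-nilpotent completion is harmless, and that has to be argued.

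More seriously, the route you propose --- construct Franke's functor from (periodic) differential objects to $\mathcal C$ using the splitting, then verify it is an equivalence through the Adams filtration --- is essentially Franke's original strategy, whose published implementation is precisely where the known error lies (found by Patchkoria; cf.\ \cite{patch}). The proof in \cite{pp21} is architected differently exactly to avoid this: rather than producing a comparison functor $D^{\mathrm{per}}(\mathcal A)\to\mathcal C$ and checking it, they prove an abstract uniqueness statement --- under the splitting and cohomological-dimension hypotheses, the homotopy $(q+1-d)$-category of a stable $\infty$-category equipped with a conservative adapted homology theory valued in $\mathcal A$ depends only on $\mathcal A$ --- by embedding the situation into an associated prestable deformation (built from Freyd-envelope/perfect-derived-category constructions) and running a Goerss--Hopkins-type tower argument there, where the lifting obstructions can be identified and shown to vanish in the stated range; the theorem then follows by applying this invariance both to $H$ and to the canonical adapted homology theory $D^{\mathrm{per}}(\mathcal A)\to\mathcal A$. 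So to turn your outline into a proof you would either have to repair Franke's functor-level argument (a nontrivial open-ended task) or develop the deformation-theoretic machinery of \cite{pp21}; as written, the hard part of the theorem is deferred rather than addressed.
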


Our main theorem (Theorem \ref{main}) generalizes the following result in the affine setting, originally proved by Pathckoria in \cite{patch} (strengthened in \cite{pp21}), which applies the algebraicity theorem to the adapted homology theory $\pi_*:\mod_R\to\mod_{R_*}$ on $R$-module spectra from Example \ref{adex}:

\begin{exa}[Theorem 8.2, \cite{pp21}] \label{affine}
Let $R$ be an $\mathbb{E}_\infty$-algebra in spectra and suppose that:
\begin{enumerate}
    \item $R_*$ is concentrated in degrees divisible by $(q+1)$ and
    \item $\Mod_{R_*}$ is of cohomological dimension $d\leq q$.
\end{enumerate}
Then, there exists an equivalence \[h_{q+1-d}\Mod_R\simeq h_{q+1-d}D^\mathrm{per}(\Mod_{R_*})\] between the homotopy $(q+1-d)$-categories of $R$-module spectra and the periodic derived category of $R_*$-modules.
\end{exa}
Note that the full result in \cite{pp21} only requires the ring spectrum in question to be $\mathbb{E}_1$; due to the algebro-geometric nature of the present work, our theorem only generalizes the $\mathbb{E}_\infty$ case.
\color{black}
\section{Algebraic models in spectral algebraic geometry}\label{maincont}


\subsection{Adapted homology theories and nonconnective geometric spectral stacks}\label{adaptnonconnect}

Throughout this section, fix a nonconnective geometric spectral stack $\mathfrak X$ with structure sheaf $\sO$ and faithfully flat cover $f:\Spec A\to \mathfrak X$.

We will begin by identifying the category $\mathrm{QCoh}(\mathfrak X)$ of quasi-coherent sheaves on $\mathfrak X$ with the category of comodules over a certain coalgebra in spectra. Note that, since the diagonal embedding for $\mathfrak X$ is affine, we have that the pullback $\Spec A \times_{\mathfrak X} \Spec A$ is also affine; denote its commutative ring spectrum of global sections by $\Gamma$. $\Gamma$ also has the structure of an $\mathbb{E}_1$-coalgebra in $(A,A)$-bimodules as described in \cite{torii}. It is easy to see that $\Gamma_*$ is a commutative Hopf algebroid over $A_*$.

As in \cite[2.4.8]{rok1}, we have the  following lemma:
\begin{lem}\label{equiv}There is an equivalence of $\infty$-categories
    \[\mathrm{QCoh}(\mathfrak X) \simeq \mathrm{Comod}_{\Gamma}\] between the category of quasi-coherent sheaves on $\mathfrak X$ and the category of comodules over the coalgebra $\Gamma$. Under this equivalence, we can identify our inverse-direct image adjunction \[\QC(\mathfrak X)\rightleftarrows \mathrm{Mod}_A\] on quasi-coherent sheaves with the forgetful-cofree adjunction \[\mathrm{Comod}_{\Gamma}\rightleftarrows \mathrm{Mod}_A\] on comodules.
\end{lem}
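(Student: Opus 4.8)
The plan is to construct the equivalence $\mathrm{QCoh}(\mathfrak X) \simeq \mathrm{Comod}_\Gamma$ via descent along the faithfully flat cover $f:\Spec A \to \mathfrak X$, using the simplicial presentation $|\Spec A^\bullet| \simeq \mathfrak X$ furnished by the \v{C}ech nerve. First I would recall that, by definition of $\mathrm{QCoh}$ as a right Kan extension, we have $\mathrm{QCoh}(\mathfrak X) \simeq \varprojlim_{\Delta} \mathrm{Mod}_{A^\bullet}$, the totalization of the cosimplicial $\infty$-category associated to the \v{C}ech nerve of $f$. Since the diagonal of $\mathfrak X$ is affine, each $\Spec A^n = \Spec A \times_{\mathfrak X} \cdots \times_{\mathfrak X} \Spec A$ is affine, and in particular $\Spec A^1 = \Spec \Gamma$ with $\Gamma$ the global sections of the pullback. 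The cosimplicial ring $A^\bullet$ is the cobar construction on the coalgebra $\Gamma$ in $(A,A)$-bimodules, so the totalization $\varprojlim_\Delta \mathrm{Mod}_{A^\bullet}$ is by definition (or by a Barr--Beck type identification) the $\infty$-category of comodules over the comonad $- \otimes_A \Gamma$ on $\mathrm{Mod}_A$, which is precisely $\mathrm{Comod}_\Gamma$.

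The key steps, in order, are: (1) identify $\mathrm{QCoh}(\mathfrak X)$ with the totalization of $\mathrm{Mod}_{A^\bullet}$ along the \v{C}ech nerve, which is immediate from the definition of $\mathrm{QCoh}$ together with the fact that geometric realizations of groupoid objects are sent to limits; (2) observe that $f^* : \mathrm{QCoh}(\mathfrak X) \to \mathrm{Mod}_A$ is comonadic — here one invokes the $\infty$-categorical Barr--Beck--Lurie theorem (\cite{HA}), checking that $f^*$ is conservative (which follows from faithful flatness of the cover, since the induced base-change is faithful on the fpqc site) and that it preserves totalizations of $f^*$-split cosimplicial objects (automatic since $\mathrm{Mod}_A$ admits all limits and $f^*$ is a limit-preserving left adjoint in the relevant sense, being pullback along an affine-diagonal map); (3) identify the resulting comonad $f^* f_*$ on $\mathrm{Mod}_A$ with the comonad $- \otimes_A \Gamma$, which is exactly the base-change computation $f^* f_* M \simeq M \otimes_A \Gamma$ using that $\Spec A \times_{\mathfrak X} \Spec A = \Spec \Gamma$ and flat base change; (4) conclude $\mathrm{QCoh}(\mathfrak X) \simeq \mathrm{Comod}_{-\otimes_A \Gamma} = \mathrm{Comod}_\Gamma$. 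The compatibility statement — that the adjunction $\mathrm{QCoh}(\mathfrak X) \rightleftarrows \mathrm{Mod}_A$ matches the forgetful-cofree adjunction $\mathrm{Comod}_\Gamma \rightleftarrows \mathrm{Mod}_A$ — is then built into the comonadic identification: the right adjoint $f_*$ corresponds to the cofree functor and $f^*$ to the forgetful functor by construction.

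The main obstacle will be step (2): verifying the hypotheses of Barr--Beck--Lurie carefully in the nonconnective setting. Conservativity of $f^*$ requires knowing that a map of quasi-coherent sheaves which becomes an equivalence after pulling back along a faithfully flat cover is already an equivalence; this is where faithful flatness (as opposed to mere flatness) is essential, and it uses fpqc descent for $\mathrm{QCoh}$ together with the fact that faithfully flat maps of $\mathbb{E}_\infty$-rings are effective-descent morphisms for modules (\cite[D.6.3]{SAG} or \cite[Theorem 2.54]{Ma18}). The preservation-of-$f^*$-split-totalizations condition is more formal but still needs the observation that $f^*$ is an exact functor of stable $\infty$-categories preserving all limits, which holds because pullback along an affine morphism is computed by a (co)limit-preserving construction. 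Given this, the identification of the comonad and hence of the category of coalgebras is essentially a citation to \cite[2.4.8]{rok1}, and the final bookkeeping identifying $\mathrm{Comod}$ of the comonad $-\otimes_A \Gamma$ with $\mathrm{Comod}_\Gamma$ in the sense of \cite{torii} is routine. I would therefore present the proof as: cite the simplicial presentation, apply Barr--Beck--Lurie with the conservativity input from faithful flatness, compute the comonad by base change, and read off the compatibility of adjunctions.
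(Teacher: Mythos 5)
Your proposal is correct and is essentially the paper's argument: both run comonadic descent along the \v{C}ech nerve of the faithfully flat cover, use the affine diagonal to identify the \v{C}ech terms with the cobar construction $\Gamma^{\otimes_A\bullet}$, and read off the matching of the $(f^*\dashv f_*)$ and forgetful--cofree adjunctions as two comonadic adjunctions presenting the same comonad $-\otimes_A\Gamma$. The only cosmetic difference is that the paper matches the two totalizations $\mathrm{Tot}(\Mod_{A^\bullet})\simeq\mathrm{Tot}(\Mod_{\Gamma^{\otimes_A\bullet}})$ directly (citing \cite[4.7.5.3]{HA}), whereas you phrase the same descent statement as a direct Barr--Beck--Lurie verification for $f^*$.
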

\begin{proof}
By the theory of comonadic descent \cite[4.7.5.3]{HA}, we see that we have equivalences 
\[\QC(\mathfrak X)\xrightarrow{\sim}\mathrm{Tot}(\Mod_{A_i}) \]
and
\[\begin{tikzcd}[column sep=small,ampersand replacement=\&]
	{\mathrm{Comod}_\Gamma} \& {\mathrm{Tot}\Big(\Mod_A} \& {\Mod_\Gamma} \& {\Mod_{\Gamma \otimes_A \Gamma}} \& {\cdots\Big)}\& {\mathrm{Tot}\Big(\Mod_{\Gamma^{\otimes_A \bullet}}\Big)}
	\arrow["\sim", from=1-1, to=1-2]
	\arrow[shift right=.75, from=1-2, to=1-3]
	\arrow[shift left=.75, from=1-2, to=1-3]
	\arrow[shift right=1.5, from=1-3, to=1-4]
	\arrow[shift left=1.5, from=1-3, to=1-4]
	\arrow[from=1-3, to=1-4]
	\arrow[shift left=2.25, from=1-4, to=1-5]
	\arrow[shift right=2.25, from=1-4, to=1-5]
	\arrow[shift left=.75, from=1-4, to=1-5]
	\arrow[shift right=.75, from=1-4, to=1-5]
 \arrow["="{description}, draw=none, from=1-5, to=1-6]
\end{tikzcd}\] where $\{A_i\}$ is the cosimplicial $\mathbb{E}_\infty$ ring given by the global sections of the \v{C}ech nerve of the cover $\Spec{A}\to \mathfrak X$. Since $\Gamma$ is defined to be the global sections of $\Spec{A}\times_{\mathfrak X}\Spec{A}$, the right hand sides are equivalent, as \begin{align*}
    \Gamma\otimes_A\Gamma &\simeq \mathscr{O}(\Spec{A}\times_{\mathfrak X}\Spec{A})\otimes_A\mathscr{O}(\Spec{A}\times_{\mathfrak X}\Spec{A})\\
    &\simeq \mathscr{O}(\Spec{A}\times_{\mathfrak X}\Spec{A})\otimes_{\mathscr{O}(\Spec A)}\mathscr{O}(\Spec{A}\times_{\mathfrak X}\Spec{A})\\
    &\simeq \mathscr{O}(\Spec{A}\times_{\mathfrak X}\Spec{A}\times_{\Spec A}\Spec{A}\times_{\mathfrak X}\Spec{A})\\
    &\simeq
\mathscr{O}(\Spec{A}\times_{\mathfrak X}\Spec{A}\times_{\mathfrak X}\Spec{A});
\end{align*} therefore, the left hand sides are also equivalent. Moreover, this equivalence sends the adjunctions $(f^* \dashv f_*)$ and $(U \dashv \Gamma \otimes_A -)$ to each other as two comonadic adjunctions presenting the same comonad.
\end{proof}

Since $\Gamma$ is flat over $A$, the K\"unneth spectral sequence calculating $\pi_*(\Gamma\otimes_A N)$ collapses for any $N\in \mathrm{Comod}_{\Gamma}$, giving us an isomorphism \[\pi_*(\Gamma\otimes_A N)\cong \Gamma_*\otimes_{A_*} \pi_*N\] coming from the edge homomorphism. So, $\pi_*$ preserves the comodule structure on $N$; that is, $\pi_*N$ is a $\Gamma_*$-comodule, and the homotopy functor on $\Gamma$-comodules factors through the category of $\Gamma_*$-comodules: \[\pi_*:\mathrm{Comod}_{\Gamma}\to \mathrm{Comod}_{\Gamma_*}.\] This functor will be the adapted homology theory we will use in the construction of our algebraic model. 

\begin{thm}\label{homtheory}
The functor $\pi_*:\mathrm{Comod}_{\Gamma}\to \mathrm{Comod}_{\Gamma_*}$ is a homology theory.
\end{thm}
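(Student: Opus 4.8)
The plan is to check directly the three conditions packaged in the definition of a homology theory: that $\pi_*$ is a functor of locally graded $\infty$-categories, that it is additive, and that it sends cofiber sequences to exact sequences. The target $\mathrm{Comod}_{\Gamma_*}$ is abelian — here one uses that $\Gamma_*$ is flat over $A_*$, so that the forgetful functor $U\colon\mathrm{Comod}_{\Gamma_*}\to\mathrm{Mod}_{A_*}$ creates kernels and cokernels — and it carries the local grading given by the internal degree shift $M_*\mapsto M_{*-1}$, while the source $\mathrm{Comod}_{\Gamma}\simeq\mathrm{QCoh}(\mathfrak X)$ is stable and hence locally graded by $\Sigma$. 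The essential input is the commuting square
\[\begin{tikzcd}[ampersand replacement=\&]
\mathrm{Comod}_{\Gamma} \arrow[r,"\pi_*"] \arrow[d,"U"'] \& \mathrm{Comod}_{\Gamma_*} \arrow[d,"U"] \\
\mathrm{Mod}_A \arrow[r,"\pi_*"'] \& \mathrm{Mod}_{A_*}
\end{tikzcd}\]
which commutes by the very construction of the $\Gamma_*$-comodule structure on $\pi_*N$ (the discussion preceding the statement shows exactly that forgetting this structure recovers $\pi_*$ of the underlying $A$-module), and whose bottom row is an adapted homology theory by Example \ref{adex}.

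Next I would record the relevant properties of the two vertical forgetful functors. The left-hand $U\colon\mathrm{Comod}_{\Gamma}\to\mathrm{Mod}_A$ is the comonadic adjunction of Lemma \ref{equiv}; as such it is conservative and preserves all limits and colimits, so in particular it is exact as a functor of stable $\infty$-categories and hence preserves cofiber sequences, finite sums, and commutes with $\Sigma$. The right-hand $U\colon\mathrm{Comod}_{\Gamma_*}\to\mathrm{Mod}_{A_*}$ is exact (again by flatness of $\Gamma_*$ over $A_*$: kernels and cokernels of comodule maps are computed on underlying modules) and conservative; an exact conservative functor of abelian categories reflects isomorphisms and reflects exactness of complexes.

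With these in hand the three conditions follow formally. For additivity, the canonical map $\pi_*(M\oplus N)\to\pi_*M\oplus\pi_*N$ becomes an isomorphism after the conservative functor $U$ — there it is the additivity of $\pi_*\colon\mathrm{Mod}_A\to\mathrm{Mod}_{A_*}$ — hence it is an isomorphism in $\mathrm{Comod}_{\Gamma_*}$. For the local grading, the comparison map $\pi_*(\Sigma M)\to(\pi_*M)[1]$ is likewise an isomorphism after $U$, by the corresponding fact for $A$-modules, so it is an isomorphism of $\Gamma_*$-comodules. Finally, given a cofiber sequence $M\to N\to P$ in $\mathrm{Comod}_{\Gamma}$, applying the exact functor $U$ gives a cofiber sequence $UM\to UN\to UP$ of $A$-modules; applying the homology theory $\pi_*\colon\mathrm{Mod}_A\to\mathrm{Mod}_{A_*}$ produces an exact sequence $\pi_*UM\to\pi_*UN\to\pi_*UP$, which by commutativity of the square is $U$ applied to $\pi_*M\to\pi_*N\to\pi_*P$; since the right-hand $U$ reflects exactness, this last sequence is exact in $\mathrm{Comod}_{\Gamma_*}$.

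The only genuinely nontrivial ingredient — and the step that will need the most care — is the exactness of $U\colon\mathrm{Comod}_{\Gamma_*}\to\mathrm{Mod}_{A_*}$, i.e. that $\mathrm{Comod}_{\Gamma_*}$ is abelian with kernels and cokernels created on underlying $A_*$-modules. This is the classical statement that comodules over a flat Hopf algebroid form an abelian category, and it is precisely where the flatness of $\Gamma$ over $A$ (hence of $\Gamma_*$ over $A_*$) enters; everything else is a formal consequence of the comonadicity of $\mathrm{Comod}_{\Gamma}\rightleftarrows\mathrm{Mod}_A$ and the reduction to Example \ref{adex}.
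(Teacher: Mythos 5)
Your proposal is correct and follows essentially the same route as the paper: both arguments hinge on the commuting square relating $\pi_*$ on $\Gamma$-comodules to $\pi_*$ on $A$-modules via the two forgetful functors, reduce additivity, compatibility with $\Sigma$, and exactness on cofiber sequences to the module-level homology theory of Example \ref{adex}, and then transport the conclusion back using conservativity of the forgetful functor on $\Gamma_*$-comodules. If anything, your justification for why $U\colon\mathrm{Comod}_{\Gamma_*}\to\mathrm{Mod}_{A_*}$ reflects exactness (it is exact because kernels and cokernels are created on underlying $A_*$-modules by flatness, and conservative) is stated more carefully than the paper's appeal to $\overline{U}$ being a conservative left adjoint.
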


\begin{proof}

We first need to show that the homotopy functor on comodules is a homological functor; that is, it preserves arbitrary sums and takes cofiber sequences to short exact sequences. Note that since the homotopy groups of a $\Gamma$-comodule are determined by the underlying $A$-module, we have a commutative diagram 
\begin{equation}\label{pi}
\begin{tikzcd}[row sep=large]
	{\comod_\Gamma} & {\mod_A} \\
	{\comod_{\Gamma_*}} & {\mod_{A_*}.}
	\arrow["U", from=1-1, to=1-2]
	\arrow["{\pi_*}", from=1-2, to=2-2]
	\arrow["{\pi_*}"', from=1-1, to=2-1]
	\arrow["{\overline{U}}"', from=2-1, to=2-2]
\end{tikzcd}
\end{equation}
To see that $\pi_*$ preserves arbitrary sums of comodules, that is, \[\pi_*(\bigoplus_i N_i)\cong \bigoplus_i\pi_*(N_i)\] for all $N_i\in \comod_\Gamma$, note that \begin{align*}
\overline{U}(\pi_*(\bigoplus_i N_i))&\cong \pi_*(U(\bigoplus_i N_i))\\
&\cong \pi_*(\bigoplus_i U(N_i))\\
&\cong \bigoplus_i\pi_*( U(N_i))\\
&\cong \bigoplus_i\overline{U}(\pi_*(N_i))\\
&\cong\overline{U}(\bigoplus_i\pi_*(N_i)).
\end{align*}
Here the first isomorphism is (\ref{pi}), the second comes from the fact that $U$ is a left adjoint, the third uses the fact that $\pi_*$ is a homological functor on modules and therefore presrves sums, the fourth is (\ref{pi}), and the final isomorphism uses the fact that $\overline{U}$ is a left adjoint. Finally, since $\overline U$ is conservative, we see that $\pi_*(\bigoplus_i N_i)\cong \bigoplus_i\pi_*(N_i)$ as needed, meaning $\pi_*$ commutes with arbitrary sums of comodules. 

To see that $\pi_*$ takes cofiber sequences in $\comod_\Gamma$ to exact sequences in $\comod_{\Gamma_*}$, let $N_1\to N_2\to N_3$ be a cofiber sequence in $\comod_\Gamma$. Then $U(N_1)\to U(N_2)\to U(N_3)$ is a cofiber sequence in $\mod_A$ as $U$ is a left adjoint, meaning $\pi_*U(N_1)\to \pi_*U(N_2)\to \pi_*U(N_3)$ is an exact sequence in $\mod_{A_*}$, as $\pi_*$ is a homological functor on modules. An application of (\ref{pi}) then tells us that $\overline{U}(\pi_*(N_1))\to \overline{U}(\pi_*(N_2))\to \overline{U}(\pi_*(N_3))$ is an exact sequence in $\mod_{A_*}$. Finally, since $\overline U$ is a conservative left adjoint, it reflects any colimits that exist in $\comod_{\Gamma_*}$, meaning $\pi_*(N_1)\to \pi_*(N_2)\to \pi_*(N_3)$ is an exact sequence in $\comod_{\Gamma_*}$, as needed. So, since $\pi_*:\mathrm{Comod}_{\Gamma}\to \mathrm{Comod}_{\Gamma_*}$ preserves arbitrary sums and takes cofiber sequences to short exact sequences, it is a homological functor.

To see that $\pi_*$ is a \textit{homology theory} on $\Gamma$-comodules, we need to see that it preserves the local grading, i.e. it takes suspensions of comodule spectra to shifts of homotopy comodules. Since suspensions in $\comod_\Gamma$ are given by suspension of the underlying spectrum, we see that suspension commutes with the forgetful functor $U$, meaning for any $\Gamma$-comodule $N$, we have \[\pi_*(\Sigma N)\cong( \pi_*N)[1]\] as needed. So, $\pi_*:\mathrm{Comod}_{\Gamma}\to \mathrm{Comod}_{\Gamma_*}$ is a homology theory.
\end{proof}

The hypotheses of the algebraicity theorem additionally require that our homology theory be \textit{adapted}. In order to show this, we will need to recall the following presentation of Brown's representability theorem from \cite[2.15]{pp21}:

\begin{pro}[Brown representability]
    Let $\mathcal C$ be a presentable stable $\infty$-category and let $E : \mathcal C^\op \to \mathsf{Ab}$ be a homological functor which takes arbitrary direct sums in $\mathcal C$ to products of abelian groups. Then, $E$ is representable in the homotopy category $h\mathcal C$.
\end{pro}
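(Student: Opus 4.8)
The plan is to prove the statement by the classical cell-attachment argument of Brown and Neeman, after first reducing to the compactly generated case.

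\emph{Reduction to the compactly generated case.} Since $\mathcal C$ is presentable and stable, it arises as an accessible localization $L:\hat{\mathcal C}\to \mathcal C$, with fully faithful right adjoint $i$, of a compactly generated stable $\infty$-category $\hat{\mathcal C}=\Fun(\mathcal D^\op,\Sp)$ for some small $\infty$-category $\mathcal D$ (see \cite{HA}). The composite $\tilde E:=E\circ (hL)^\op:(h\hat{\mathcal C})^\op\to\mathsf{Ab}$ is again homological and still carries coproducts to products, because $L$, being a left adjoint between presentable stable categories, is exact and preserves coproducts. If $\tilde E$ is represented by some $Y\in h\hat{\mathcal C}$, then $Y$ is $L$-local: any map $f$ inverted by $L$ induces an isomorphism on $\Hom(-,Y)=\tilde E(-)=E(L(-))$, so $Y\simeq i(LY)$. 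Setting $W:=LY$, fully faithfulness of $i$ together with $Li\simeq\id$ gives $E(Z)\cong\Hom_{h\mathcal C}(Z,W)$ naturally in $Z\in\mathcal C$. Hence it suffices to treat $\hat{\mathcal C}$; from now on we may assume $\mathcal C$ is compactly generated, and we fix a set $G$ of compact generators.

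\emph{Construction of the representing object.} I would build a sequence $X_0\to X_1\to\cdots$ in $\mathcal C$ together with classes $u_n\in E(X_n)$ compatible under the transition maps. Start with $X_0=\bigoplus_{g\in G}\bigoplus_{a\in E(g)}g$ and let $u_0\in E(X_0)=\prod_{g}\prod_{a}E(g)$ be the tautological class (its component at $(g,a)$ is $a$); then the transformation $\Hom_{h\mathcal C}(-,X_0)\to E(-)$, $\varphi\mapsto E(\varphi)(u_0)$, is surjective on every $g\in G$. Given $(X_n,u_n)$, let $\{h_\alpha:g_\alpha\to X_n\}_\alpha$ enumerate all maps from objects of $G$ killed by $u_n$ (i.e.\ $E(h_\alpha)(u_n)=0$), and set $X_{n+1}=\mathrm{cofib}\big(\bigoplus_\alpha g_\alpha\to X_n\big)$. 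Applying the homological functor $E$ to this cofiber sequence, and using that $E$ sends $\bigoplus_\alpha g_\alpha$ to $\prod_\alpha E(g_\alpha)$, we see $u_n$ maps to $0$ there, so by exactness it lifts to some $u_{n+1}\in E(X_{n+1})$. Finally set $X:=\varinjlim_n X_n$, presented by the cofiber sequence $\bigoplus_n X_n\xrightarrow{\mathrm{id}-\mathrm{shift}}\bigoplus_n X_n\to X$; the induced exact sequence for $E$ (a Milnor-type $\lim^1$ sequence) shows the compatible family $(u_n)$ lifts to a class $u_\infty\in E(X)$, giving a natural transformation $\eta:\Hom_{h\mathcal C}(-,X)\to E(-)$.

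\emph{Verifying $\eta$ is an isomorphism.} On a generator $g\in G$, surjectivity is immediate from the construction of $X_0$. For injectivity, suppose $f:g\to X$ has $E(f)(u_\infty)=0$; since $g$ is compact and $X=\varinjlim X_n$ is a sequential colimit, $f$ factors as $j_n\circ h$ with $h:g\to X_n$ satisfying $E(h)(u_n)=0$, so $h$ occurs in the enumeration defining $X_{n+1}$ and therefore $g\xrightarrow{h}X_n\to X_{n+1}$ is null (two consecutive maps in a cofiber sequence), forcing $f$ to be null. Thus $\eta$ is an isomorphism on all of $G$. Both $\Hom_{h\mathcal C}(-,X)$ and $E$ are homological and carry coproducts to products, so the full subcategory of $h\mathcal C$ on which $\eta$ is an isomorphism is closed under shifts, cofibers (the five lemma applied to the long exact sequences), and coproducts; since it contains $G$ and $G$ generates, it is all of $h\mathcal C$, and $E\simeq\Hom_{h\mathcal C}(-,X)$ is representable. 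The main obstacle is the pair of lifting steps: checking that each $u_n$ lifts to $u_{n+1}$ (exactly where homologicality and the coproduct$\to$product hypothesis are both used) and that the $u_n$ assemble to $u_\infty$ despite a possible $\lim^1$ obstruction — only existence of a lift is needed and follows from exactness, so one must take care not to claim uniqueness. The reduction step is the only place where presentability beyond compact generation enters; alternatively one can bypass it by running the same construction transfinitely and invoking Neeman's theory of well-generated triangulated categories to ensure that maps out of the chosen $\kappa$-compact generators factor through a stage of a $\kappa$-filtered tower.
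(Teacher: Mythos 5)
Your proposal supplies a proof where the paper supplies none: the paper simply recalls this proposition from \cite{pp21} and gives no argument, so your route is necessarily a different one. The structure you chose is sound: realize a presentable stable $\mathcal C$ as an accessible localization $L:\hat{\mathcal C}\to\mathcal C$ of a compactly generated $\hat{\mathcal C}=\Fun(\mathcal D^{\op},\mathcal{S}p)$, check that $E\circ (hL)^{\op}$ is still homological and sends coproducts to products (this uses only that $L$ is an exact, coproduct-preserving left adjoint), note that a representing object is $L$-local and so descends to $\mathcal C$, and then run the Brown--Neeman cell-attachment construction with the Milnor-type sequence to produce $u_\infty$. The delicate lifting steps are handled correctly (only existence of lifts is used, exactly where homologicality and the coproduct-to-product hypothesis enter). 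One small point in the reduction: locality of $Y$ requires equivalences of mapping \emph{spaces}, not just of hom-sets in $h\hat{\mathcal C}$; this is harmless because an $L$-equivalence $f$ has all suspensions $\Sigma^n f$ again $L$-equivalences, so your hom-set argument applied to these gives the equivalence on all homotopy groups. Your closing remark about well-generated triangulated categories is essentially the standard way this statement is obtained in the literature, so the two routes end in the same place; yours has the merit of being self-contained at the cost of the explicit localization bookkeeping.

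There is, however, one step that fails as written: the assertion that the full subcategory of objects on which $\eta$ is an isomorphism is ``closed under shifts.'' Since $E$ is only a homological functor to abelian groups, there is no comparison map between $E(Z)$ and $E(\Sigma Z)$, so $\eta_Z$ being an isomorphism says nothing about $\eta_{\Sigma Z}$; worse, the five-lemma argument for closure under cofibers already needs $\eta$ to be an isomorphism on $\Sigma Z_1$ and $\Sigma Z_2$ (the Puppe sequence places these next to $Z_3$), so the same issue undermines that closure property too. The standard repair: replace $G$ at the outset by its closure under $\Sigma^{\pm 1}$ (still a small set of compact generators), so that your construction kills maps from all shifted generators and the verification gives $\eta_{\Sigma^k g}$ an isomorphism for every $g\in G$ and $k\in\mathbb Z$; then consider the subcategory of those $Z$ such that $\eta_{\Sigma^k Z}$ is an isomorphism for all $k\in\mathbb Z$. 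This subcategory is closed under shifts by definition, under cofibers by the five lemma, and under coproducts, and it contains the generators, hence is all of $h\hat{\mathcal C}$. With that adjustment your proof is complete.
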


We can now prove that our homology theory is adapted:
\color{black}
\begin{thm}\label{adapted}
The homology theory $\pi_*:\mathrm{Comod}_{\Gamma}\to \mathrm{Comod}_{\Gamma_*}$ is adapted.
\end{thm}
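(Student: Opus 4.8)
The plan is to verify the three defining conditions of an adapted homology theory for the homology theory $H=\pi_*\colon\mathrm{Comod}_\Gamma\to\mathrm{Comod}_{\Gamma_*}$, drawing on two facts already available: that $(A_*,\Gamma_*)$ is a \emph{flat} Hopf algebroid, since $\Gamma$ is flat over $A$ and hence $\Gamma_*$ is flat over $A_*$; and that $\pi_*\colon\mathrm{Mod}_A\to\mathrm{Mod}_{A_*}$ is adapted (Example~\ref{adex}).

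First I would establish that $\mathrm{Comod}_{\Gamma_*}$ has enough injectives. Flatness of $\Gamma_*$ over $A_*$ makes the forgetful functor $\overline U\colon\mathrm{Comod}_{\Gamma_*}\to\mathrm{Mod}_{A_*}$ exact, with right adjoint the cofree functor $K\mapsto\Gamma_*\otimes_{A_*}K$; hence $\Gamma_*\otimes_{A_*}I$ is injective in $\mathrm{Comod}_{\Gamma_*}$ whenever $I$ is injective in $\mathrm{Mod}_{A_*}$, because $\mathrm{Hom}_{\mathrm{Comod}_{\Gamma_*}}(-,\Gamma_*\otimes_{A_*}I)\cong\mathrm{Hom}_{A_*}(\overline U(-),I)$ is then exact. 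Since the coaction realizes every $\Gamma_*$-comodule as a subobject of a cofree one, and $\mathrm{Mod}_{A_*}$ has enough injectives, $\mathrm{Comod}_{\Gamma_*}$ has enough injectives; moreover, every injective $\Gamma_*$-comodule is a retract of one of the form $\Gamma_*\otimes_{A_*}I$ with $I$ injective over $A_*$.

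Next I would handle the injective lifts and the structure morphism, first for cofree injectives and then in general. Given $I$ injective over $A_*$, adaptedness of $\pi_*$ on $A$-modules provides an injective lift $I_A\in\mathrm{Mod}_A$ whose structure map $\pi_*(I_A)\to I$ is an isomorphism. I claim the cofree $\Gamma$-comodule $\Gamma\otimes_A I_A$ is an injective lift of $\Gamma_*\otimes_{A_*}I$: for $N\in\mathrm{Comod}_\Gamma$, the cofree--forgetful adjunction of Lemma~\ref{equiv}, the defining property of $I_A$, the square~(\ref{pi}), and the cofree--forgetful adjunction for $\Gamma_*$-comodules assemble into natural isomorphisms
\[
\begin{aligned}
{}[N,\Gamma\otimes_A I_A]\ &\cong\ [UN,I_A]\ \cong\ \mathrm{Hom}_{A_*}(\pi_*UN,I)\\
&\cong\ \mathrm{Hom}_{A_*}(\overline U\pi_*N,I)\ \cong\ \mathrm{Hom}_{\mathrm{Comod}_{\Gamma_*}}(\pi_*N,\Gamma_*\otimes_{A_*}I).
\end{aligned}
\]
Since $\Gamma$ is flat over $A$, the K\"unneth edge map identifies $\pi_*(\Gamma\otimes_A I_A)$ with the cofree $\Gamma_*$-comodule $\Gamma_*\otimes_{A_*}\pi_*(I_A)$, and under this identification the structure morphism~(\ref{tau}) becomes $\mathrm{id}_{\Gamma_*}\otimes_{A_*}\bigl(\pi_*(I_A)\xrightarrow{\ \sim\ }I\bigr)$ by the counit axiom of the Hopf algebroid, so it is an isomorphism. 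For an arbitrary injective $i\in\mathrm{Comod}_{\Gamma_*}$, write $i$ as a retract $i\xrightarrow{\,s\,}j\xrightarrow{\,r\,}i$ of a cofree injective $j$, using the first step. The functor $\mathrm{Hom}_{\mathrm{Comod}_{\Gamma_*}}(\pi_*(-),i)\colon(\mathrm{Comod}_\Gamma)^{\op}\to\mathsf{Ab}$ is homological — it is the composite of the homological functor $\pi_*$ of Theorem~\ref{homtheory} with the exact (since $i$ is injective) functor $\mathrm{Hom}(-,i)$ — and carries coproducts to products, since $\pi_*$ preserves coproducts by Theorem~\ref{homtheory}; as $\mathrm{Comod}_\Gamma\simeq\mathrm{QCoh}(\mathfrak X)$ is presentable (it lies in $\mathrm{CAlg}(\mathrm{Pr}^L)$) and stable (a limit of the stable categories $\mathrm{Mod}_{A_i}$), Brown representability supplies an injective lift $i_\mathcal C$. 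The retract $(s,r)$ lifts, via the Yoneda lemma in $h\mathrm{Comod}_\Gamma$, to a retract $i_\mathcal C\xrightarrow{\widetilde s}j_\mathcal C\xrightarrow{\widetilde r}i_\mathcal C$, and naturality of~(\ref{tau}) in the injective variable, together with the fact that $\tau_j$ is an isomorphism, forces $\pi_*(\widetilde r)\circ\tau_j^{-1}\circ s$ to be a two-sided inverse of $\tau_i$. This verifies conditions (1)--(3), so $\pi_*$ is adapted.

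The step I expect to be the real obstacle is the identification of the abstractly defined structure morphism~(\ref{tau}) on a cofree lift with $\mathrm{id}_{\Gamma_*}\otimes_{A_*}(\text{the module-level structure map})$: this requires tracking the identity of $\Gamma\otimes_A I_A$ through the four natural isomorphisms above, against the units and counits of the two cofree adjunctions, and then invoking the counit axiom of the Hopf algebroid. Everything else — the enough-injectives statement, the Brown representability input, and the retract argument — is formal, and all the structural inputs (flatness of $\Gamma$ over $A$, adaptedness of $\pi_*$ on modules, and the presentability and stability of $\mathrm{QCoh}(\mathfrak X)$) are already in place.
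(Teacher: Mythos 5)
Your proposal is correct, but it verifies adaptedness by a genuinely different route than the paper. The paper's proof is short: it applies Brown representability directly to $\mathrm{Hom}_{\Gamma_*}(\pi_*(-),i)$ for an \emph{arbitrary} injective comodule $i$, cites \cite[Corollary 2.17]{pp21} for the existence of injective lifts, and then checks that the structure morphism $\pi_*I\to i$ is an isomorphism by a one-line evaluation argument (``taking $M=A$''), leaving the existence of enough injectives in $\mathrm{Comod}_{\Gamma_*}$ implicit. You instead reduce everything to the affine case: you prove enough injectives explicitly via the cofree functor for the flat Hopf algebroid $(A_*,\Gamma_*)$, observe that every injective comodule is a retract of a cofree one $\Gamma_*\otimes_{A_*}I$, exhibit an explicit lift $\Gamma\otimes_A I_A$ of such a cofree injective using adaptedness of $\pi_*$ on $\mathrm{Mod}_A$ (Example \ref{adex}) together with the two cofree--forgetful adjunctions and the square (\ref{pi}), identify the structure morphism with $\mathrm{id}_{\Gamma_*}\otimes_{A_*}(\pi_*I_A\xrightarrow{\sim}I)$ via the K\"unneth isomorphism and the counit axiom, and finally transfer condition (3) to arbitrary injectives by the retract/naturality argument (your computation that $\pi_*(\widetilde r)\circ\tau_j^{-1}\circ s$ is a two-sided inverse of $\tau_i$ is correct, given $\widetilde r\circ\widetilde s=\mathrm{id}$, which follows from functoriality of the Yoneda lifting in $h\mathrm{Comod}_\Gamma$). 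The step you flag as the obstacle --- tracing the abstractly defined map (\ref{tau}) through the adjunction isomorphisms --- does work exactly as you describe and is the standard comodule-theoretic computation; it is in effect the content the paper compresses into its evaluation step. What your approach buys is self-containedness and explicitness (explicit lifts of cofree injectives, an explicit proof of enough injectives, no reliance on the cited corollary for the retract closure); what the paper's approach buys is brevity, at the cost of a rather terse justification of the isomorphism condition. Both arguments rest on the same structural inputs: flatness of $\Gamma$ over $A$, Theorem \ref{homtheory}, presentability and stability of $\mathrm{Comod}_\Gamma\simeq\mathrm{QCoh}(\mathfrak X)$, and Brown representability.
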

\begin{proof}
In order to see that our homology theory is adapted, let $i\in \mathrm{Comod}_{\Gamma_*}$ be an injective comodule. It is easy to see that \[\mathrm{Hom}_{\Gamma_*}(\pi_*(-), i):\mathrm{Comod}_\Gamma^{\op}\to \mathrm{Ab}\] satisfies the conditions of Brown representability, meaning there exists an $I\in \mathrm{Comod}_{\Gamma}$ and an isomorphism \[\pi_*\mathrm{Map}_{\Gamma}(M, I) \cong \mathrm{Hom}_{\Gamma_*}(\pi_*(M), i)\] for all $M\in \mathrm{Comod}_{\Gamma}$. We can see by Corollary 2.17 in \cite{pp21} that $\pi_*$ has lifts for injectives, and taking $M=A$ shows that the structure morphism $\pi_*I\to i$ is an isomorphism, meaning $\pi_*:\mathrm{Comod}_{\Gamma}\to \mathrm{Comod}_{\Gamma_*}$ is adapted.

\end{proof}

\begin{cor}
There is an adapted homology theory \[\pi_*:\QC(\mathfrak X)\to \Comod_{\Gamma_*}.\]
\end{cor}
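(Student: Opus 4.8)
The plan is to transport the adapted homology theory of Theorem~\ref{adapted} across the equivalence of Lemma~\ref{equiv}. First I would recall that Lemma~\ref{equiv} supplies an equivalence of $\infty$-categories $\Phi:\QC(\mathfrak X)\xrightarrow{\sim}\Comod_\Gamma$ under which the inverse--direct image adjunction $\QC(\mathfrak X)\rightleftarrows\mod_A$ corresponds to the forgetful--cofree adjunction $\Comod_\Gamma\rightleftarrows\mod_A$. In particular $\Phi$ is compatible with the forgetful functors down to $\mod_A$, and since the suspension on each of $\QC(\mathfrak X)$ and $\Comod_\Gamma$ is computed on underlying $A$-modules, $\Phi$ intertwines the two suspensions; being an equivalence of stable $\infty$-categories, it is moreover exact and preserves all colimits. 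I would then \emph{define} the functor in the statement to be the composite
\[
\pi_*:\QC(\mathfrak X)\xrightarrow{\ \Phi\ }\Comod_\Gamma\xrightarrow{\ \pi_*\ }\Comod_{\Gamma_*},
\]
with the second arrow the functor of Theorems~\ref{homtheory} and~\ref{adapted}.

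Next I would verify that this composite is again an adapted homology theory, the point being that every defining property is stable under precomposition with an equivalence of locally graded stable $\infty$-categories. That the composite is homological — additive, preserving arbitrary direct sums, and sending cofiber sequences to exact sequences — is immediate since $\Phi$ preserves colimits and $\pi_*\colon\Comod_\Gamma\to\Comod_{\Gamma_*}$ has these properties by Theorem~\ref{homtheory}; compatibility with the local grading follows from the previous paragraph. For adaptedness, condition (1) (enough injectives in $\Comod_{\Gamma_*}$) is literally unchanged, as the target is the same abelian category. For condition (2), given an injective $i\in\Comod_{\Gamma_*}$ with injective lift $i_\mathcal{C}\in\Comod_\Gamma$ as produced in Theorem~\ref{adapted}, the object $\Phi^{-1}(i_\mathcal{C})\in\QC(\mathfrak X)$ represents $\Hom_{\Gamma_*}(\pi_*\Phi(-),i)$ on $h\QC(\mathfrak X)$ because $\Phi$ is an equivalence, so it is an injective lift for $i$ with respect to the composite. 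For condition (3), under the identification $[\Phi^{-1}(i_\mathcal{C}),\Phi^{-1}(i_\mathcal{C})]\cong[i_\mathcal{C},i_\mathcal{C}]$ the structure morphism~(\ref{tau}) attached to $\Phi^{-1}(i_\mathcal{C})$ agrees with the one attached to $i_\mathcal{C}$, which is an isomorphism by Theorem~\ref{adapted}.

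There is essentially no hard step: the corollary is a formal consequence of Lemma~\ref{equiv} and Theorem~\ref{adapted} once one observes that being an adapted homology theory is invariant under equivalence of the source. The only point requiring a moment's care — and the one I would be explicit about — is that the equivalence of Lemma~\ref{equiv} is one of \emph{locally graded} $\infty$-categories, i.e. that it intertwines the suspension functors; this reduces, as indicated above, to the fact that both suspensions are detected on underlying $A$-modules together with the compatibility of $\Phi$ with the forgetful functors, which is part of the content of Lemma~\ref{equiv}.
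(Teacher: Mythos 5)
Your proposal is correct and follows exactly the paper's route: the paper also obtains the corollary by precomposing the adapted homology theory of Theorem~\ref{adapted} with the equivalence of Lemma~\ref{equiv}, simply leaving the formal verification that adaptedness is invariant under such precomposition implicit. Your spelled-out check of the local grading and the transport of injective lifts is a fuller version of the same one-line argument.
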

\begin{proof}
    This is immediate from Theorem \ref{adapted}, by precomposing the homology theory of Theorem \ref{homtheory} with the equivalence of Lemma \ref{equiv}.
\end{proof}

The homology theory can also be defined as the homotopy sheaves defined via pullback as in \cite{rok1} for a more algebro-geometric interpretation. We choose to work with comodules in this thesis due in part to their ubiquity in the literature surrounding algebraic models.


\subsection{The algebraic model}\label{algmod}

In this section, we prove that our situtation satisfies the remaining hypotheses of the algebraicity theorem, namely that our adapted homology theory is conservative and its target admits a certain splitting. We then apply the theorem to construct our algebraic model. 

\begin{lem}
The functor $\pi_*:\mathrm{Comod}_{\Gamma}\to \mathrm{Comod}_{\Gamma_*}$ is conservative.
\end{lem}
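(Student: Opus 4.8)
The plan is to reduce conservativity of $\pi_*\colon\Comod_\Gamma\to\Comod_{\Gamma_*}$ to the corresponding statement for modules, using the commuting square (\ref{pi}) together with the fact that the forgetful functor $U\colon\Comod_\Gamma\to\Mod_A$ is itself conservative. First I would recall why $U$ is conservative: it is the left adjoint in a comonadic adjunction (by Lemma \ref{equiv}, it is the forgetful functor off the comonad $\Gamma\otimes_A-$), and the forgetful functor from coalgebras over a comonad detects equivalences. Similarly, $\overline U\colon\Comod_{\Gamma_*}\to\Mod_{A_*}$ is conservative for the analogous (purely algebraic) reason.

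Now let $\phi\colon N_1\to N_2$ be a map in $\Comod_\Gamma$ with $\pi_*(\phi)$ an isomorphism in $\Comod_{\Gamma_*}$. Applying $\overline U$ and using the commutativity of (\ref{pi}), the map $\pi_*(U\phi)\colon \pi_*(UN_1)\to\pi_*(UN_2)$ is an isomorphism of $A_*$-modules. But $\pi_*\colon\Mod_A\to\Mod_{A_*}$ is conservative: a map of $A$-module spectra (equivalently, of spectra) inducing an isomorphism on all homotopy groups is an equivalence by Whitehead's theorem. Hence $U\phi$ is an equivalence in $\Mod_A$, and since $U$ is conservative, $\phi$ is an equivalence in $\Comod_\Gamma$. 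This shows $\pi_*$ reflects equivalences, i.e.\ is conservative.

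I do not expect any serious obstacle here; the only point requiring a little care is making sure the two "conservativity" inputs are legitimately available. Conservativity of $U$ follows from comonadicity as invoked in Lemma \ref{equiv} (the Barr--Beck--Lurie comonadicity theorem of \cite[4.7.5.3]{HA} in particular asserts the forgetful functor is conservative), and conservativity of $\pi_*$ on $\Mod_A$ is standard since homotopy groups detect equivalences of spectra. One could alternatively phrase the argument without reference to $U$ at all, directly: if $\pi_*(\phi)$ is an isomorphism then, since $\pi_*$ on comodules computes homotopy groups of the underlying spectrum, $\phi$ induces an isomorphism on all homotopy groups of underlying spectra and is therefore an equivalence. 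Either formulation is short; I would present the first since it parallels the structure of the proof of Theorem \ref{homtheory} and reuses the conservativity of $\overline U$ already exploited there.
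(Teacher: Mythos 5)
Your proof is correct and takes essentially the same route as the paper: conservativity of $\pi_*$ on $\Mod_A$ (homotopy groups detect equivalences), conservativity of $U$ and $\overline U$ via comonadicity, and the commuting square (\ref{pi}) to pass between the two. If anything, your write-up is the more careful one, since you track a map $\phi$ with $\pi_*(\phi)$ an isomorphism rather than merely comparing objects with isomorphic homotopy, which is the correct formulation of conservativity.
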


\begin{proof}

It is clear that $\pi_*$ is conservative on modules, as homotopy groups detect equivalences of $A$-modules. With notation as in \ref{pi}, both $U$ and $\overline{U}$ are comonadic, and therefore also conservative. Let $N_1,N_2\in \comod_\Gamma$ with $\pi_*N_1\cong \pi_*N_2$. Then $\overline U(\pi_*N_1)\cong \overline U(\pi_*N_2)$, meaning $\pi_*U(N_1)\cong \pi_*U(N_2)$. So, since $\pi_*$ is conservative on $\mod_A$ and $U$ is also conservative, we have $N_1\simeq N_2$. So,  $\pi_*:\mathrm{Comod}_{\Gamma}\to \mathrm{Comod}_{\Gamma_*}$ is conservative.
\end{proof}
\begin{cor}\label{consadap}
    The adapted homology theory $\pi_*:\QC(\mathfrak X)\to \Comod_{\Gamma_*}$ is conservative.
\end{cor}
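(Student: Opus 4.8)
The plan is to obtain this as a purely formal consequence of the lemma immediately preceding it together with the equivalence of Lemma~\ref{equiv}. Recall that the corollary following Theorem~\ref{adapted} already exhibits $\pi_*:\QC(\mathfrak X)\to\Comod_{\Gamma_*}$ as the composite of the equivalence $\QC(\mathfrak X)\xrightarrow{\sim}\Comod_\Gamma$ of Lemma~\ref{equiv} with the adapted homology theory $\pi_*:\Comod_\Gamma\to\Comod_{\Gamma_*}$ of Theorem~\ref{homtheory}. Since the ``adapted homology theory'' part of the statement is thus already in hand, the only thing left to verify is that this composite is conservative.

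I would argue in three short steps. First, any equivalence of $\infty$-categories is conservative — indeed it reflects all equivalences — so in particular the equivalence of Lemma~\ref{equiv} is conservative. Second, the lemma immediately above establishes that $\pi_*:\Comod_\Gamma\to\Comod_{\Gamma_*}$ is conservative. Third, a composite of conservative functors is conservative: given a morphism $f$ in $\QC(\mathfrak X)$ such that $\pi_*(f)$ is an equivalence in $\Comod_{\Gamma_*}$, transporting $f$ across the equivalence of Lemma~\ref{equiv} yields a morphism of $\Gamma$-comodules that becomes an equivalence under $\pi_*:\Comod_\Gamma\to\Comod_{\Gamma_*}$, hence is an equivalence by the preceding lemma, hence $f$ itself is an equivalence since the equivalence of Lemma~\ref{equiv} reflects equivalences.

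I do not anticipate any genuine obstacle here: all of the substantive work has already been carried out in the preceding lemma (conservativity on $\Gamma$-comodules, via the comonadic/conservative forgetful functors $U$ and $\overline U$) and in Lemma~\ref{equiv} (the comonadic descent identification $\QC(\mathfrak X)\simeq\Comod_\Gamma$), and the corollary is simply their formal combination.
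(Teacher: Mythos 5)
Your proposal is correct and matches the paper's (essentially implicit) argument: the corollary follows by precomposing the conservative homology theory $\pi_*:\Comod_\Gamma\to\Comod_{\Gamma_*}$ of the preceding lemma with the equivalence of Lemma~\ref{equiv}, which reflects equivalences. Nothing further is needed.
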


The hypotheses of our main theorem will require that the target of our homology theory splits as a certain product. We will need the following lemma in the proof of our main theorem:

\begin{lem}
Let $\Gamma$ be as above and assume $\Gamma_*$ is concentrated in degrees divisible by $q+1$. Then $\mathrm{Comod}_{\Gamma_*}$ admits a splitting of order $q+1$.
\end{lem}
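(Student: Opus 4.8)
The goal is to exhibit a decomposition $\comod_{\Gamma_*} \simeq \prod_{\phi \in \Z/(q+1)\Z} \mathcal A_\phi$ into Serre subcategories with $[1]\mathcal A_\phi \subseteq \mathcal A_{\phi+1}$, under the hypothesis that $\Gamma_*$ is concentrated in degrees divisible by $q+1$. The natural candidate for $\mathcal A_\phi$ is the full subcategory of $\comod_{\Gamma_*}$ consisting of those comodules whose underlying $A_*$-module is concentrated in degrees congruent to $\phi$ modulo $q+1$. First I would check that this is well-defined, i.e.\ that such comodules form a subcategory closed under the comodule structure maps: since $\Gamma_*$ lives in degrees divisible by $q+1$, the coaction $N \to \Gamma_* \otimes_{A_*} N$ is a map of graded $A_*$-modules that cannot move the residue class mod $q+1$, so the coaction restricts appropriately and $\mathcal A_\phi$ is an abelian subcategory. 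Closure under subobjects, quotients, and extensions (the Serre property) is then inherited from the analogous fact for graded $A_*$-modules, using that kernels and cokernels of comodule maps are computed on underlying modules (this is where flatness of $\Gamma$ over $A$, already invoked above, matters).

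Next I would establish the product decomposition. Any $\Gamma_*$-comodule $N$ decomposes as an $A_*$-module into its graded pieces grouped by residue class, $N \cong \bigoplus_{\phi \in \Z/(q+1)\Z} N_\phi$ where $N_\phi$ is the sum of the homogeneous components in degrees $\equiv \phi$; and again because $\Gamma_*$ is concentrated in degrees divisible by $q+1$, the coaction respects this direct sum splitting, so each $N_\phi$ is a subcomodule and $N \cong \bigoplus_\phi N_\phi$ in $\comod_{\Gamma_*}$. This gives functorial projections $\comod_{\Gamma_*} \to \mathcal A_\phi$ and identifies $\comod_{\Gamma_*}$ with the product $\prod_\phi \mathcal A_\phi$ (morphisms between comodules in different pieces vanish, since a comodule map is in particular a degree-preserving $A_*$-module map). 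Finally, the local grading $[1]$ on $\comod_{\Gamma_*}$ is the shift of the underlying graded object, which sends degree $n$ to degree $n+1$, hence carries $\mathcal A_\phi$ into $\mathcal A_{\phi+1}$, exactly as required.

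I do not anticipate a serious obstacle here; the argument is essentially the observation that a grading by $\Z$ modulo a subgroup $(q+1)\Z$ descends to the comodule category whenever the Hopf algebroid itself is concentrated in degrees in that subgroup. The one point requiring a little care is making precise that the direct sum decomposition $N \cong \bigoplus_\phi N_\phi$ is a decomposition \emph{of comodules} and not merely of modules — i.e.\ checking compatibility of the coaction with the projections — and that the resulting category-theoretic product is the $\infty$-categorical (equivalently, $1$-categorical, since $\comod_{\Gamma_*}$ is an ordinary abelian category) product of the $\mathcal A_\phi$. Both follow directly from the degree bookkeeping once the hypothesis on $\Gamma_*$ is in hand.
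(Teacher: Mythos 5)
Your argument is correct and is essentially the same as the paper's, which simply defers to the proof of Lemma 8.1 of Patchkoria--Pstragowski: decompose each comodule by residue class of its internal degree modulo $q+1$ and observe the coaction, being degree-preserving with $\Gamma_*$ concentrated in degrees divisible by $q+1$, respects the decomposition. The only point you leave implicit is that $A_*$ itself is concentrated in degrees divisible by $q+1$ (it is a retract of $\Gamma_*$ via the counit of the Hopf algebroid), which is what makes each piece $N_\phi$ an $A_*$-submodule in the first place.
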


\begin{proof}
The proof of this statement is the same as the proof of \cite[Lemma 8.1]{pp21}.
\end{proof}

\begin{thm}\label{main}
Let $\mathfrak X$ be a nonconnective geometric spectral stack with faithfully flat cover $\Spec(A)\to \mathfrak X$ and corresponding coalgebra $\Gamma$. Suppose that the Hopf algebroid $(A_*,\Gamma_*)$ satisfies the following:
\begin{enumerate}
\item $\Gamma_*$ is concentrated in degrees divisible by $q+1$ and
\item the category $\mathrm{Comod}_{\Gamma_*}$ of $\Gamma_*$-comodules is of cohomological dimension $d\leq q$.
\end{enumerate}

Then there exists a canonical equivalence
\[h_{q+1-d}\QC(\mathfrak X)\simeq h_{q+1-d} D^\mathrm{per}(\mathrm{Comod}_{\Gamma_*})\] between the homotopy $(q+1-d)$-categories of quasi-coherent sheaves on $\mathfrak X$ and the periodic derived category of $\Gamma_*$-comodules.
\end{thm}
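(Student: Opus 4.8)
The plan is to assemble the ingredients already in place and feed them into Franke's algebraicity theorem (Theorem \ref{franke}). Concretely, I take $\mathcal C = \QC(\mathfrak X)$, $\mathcal A = \Comod_{\Gamma_*}$, and $H = \pi_*$ the adapted homology theory obtained by composing the homology theory of Theorem \ref{homtheory} with the equivalence $\QC(\mathfrak X)\simeq \Comod_\Gamma$ of Lemma \ref{equiv}. By the corollary following Theorem \ref{adapted}, $H$ is adapted; by Corollary \ref{consadap} it is conservative; and $\QC(\mathfrak X)$ is presentable and stable, being the limit of the presentable stable categories $\Mod_{A_i}$ along the \v{C}ech cosimplicial diagram, so $H$ is a homology theory out of a presentable stable $\infty$-category as Theorem \ref{franke} requires. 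It therefore remains only to verify the three numbered hypotheses of Theorem \ref{franke} from the two numbered hypotheses on the Hopf algebroid $(A_*,\Gamma_*)$.

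First, hypothesis (1) of Theorem \ref{franke}, that $\mathcal A$ admits a splitting of order $q+1$: since by hypothesis (1) of the present theorem $\Gamma_*$ is concentrated in degrees divisible by $q+1$, the preceding lemma (whose proof follows \cite[Lemma 8.1]{pp21}) supplies exactly a decomposition $\Comod_{\Gamma_*}\simeq\prod_{\phi\in\Z/(q+1)}\mathcal A_\phi$ of Serre subcategories with $[1]\mathcal A_\phi\subseteq\mathcal A_{\phi+1}$, where $[1]$ is the internal degree shift — the same local grading that Theorem \ref{homtheory} showed $\pi_*$ intertwines with suspension. Second, hypothesis (2), that $\mathcal A$ has finite cohomological dimension $d$: this is precisely hypothesis (2) of the present theorem. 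Third, hypothesis (3), that $q\ge d$: this is again part of hypothesis (2). Hence Theorem \ref{franke} applies verbatim and yields the equivalence of homotopy $(q+1-d)$-categories $h_{q+1-d}D^\mathrm{per}(\Comod_{\Gamma_*})\simeq h_{q+1-d}\QC(\mathfrak X)$, which is the assertion. The word ``canonical'' refers to the fact that the equivalence produced in \cite{pp21} is natural in the adapted homology theory, so no data beyond the chosen cover $\Spec A\to\mathfrak X$ enters.

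Two small points deserve care. One must know that $\Comod_{\Gamma_*}$ is genuinely a locally graded abelian category with enough injectives and finite cohomological dimension so that $D^\mathrm{per}(\Comod_{\Gamma_*})$ is defined via the model structure on differential objects; the enough-injectives part uses the flatness of $\Gamma$ over $A$ (equivalently of $\Gamma_*$ over $A_*$), which makes $(A_*,\Gamma_*)$ a flat Hopf algebroid and hence $\Comod_{\Gamma_*}$ a Grothendieck abelian category, while finite cohomological dimension is hypothesis (2). The main ``obstacle'' is thus essentially bookkeeping rather than mathematics: the substantive work has been front-loaded into Theorems \ref{homtheory} and \ref{adapted} and the conservativity lemma, and the only genuinely new verification is that the hypotheses on $(A_*,\Gamma_*)$ translate exactly into hypotheses (1)--(3) of Franke's theorem — which they do, once one identifies the cohomological dimension of $\Comod_{\Gamma_*}$ with the invariant $d$ there and observes that ``concentrated in degrees divisible by $q+1$'' is precisely what the splitting lemma consumes.
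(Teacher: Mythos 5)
Your proposal is correct and follows the same route as the paper: combine Corollary \ref{consadap} (conservative adapted homology theory), the splitting lemma for $\Comod_{\Gamma_*}$ under hypothesis (1), and the cohomological dimension hypothesis, then apply Franke's algebraicity theorem from \cite{pp21}. The extra remarks you add (presentability of $\QC(\mathfrak X)$, $\Comod_{\Gamma_*}$ being Grothendieck via flatness) are sensible bookkeeping but do not change the argument, which matches the paper's proof.
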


\begin{proof}
This is immediate from Corollary \ref{consadap}, the previous lemma, and an application of Franke's algebraicity conjecture, proved in  \cite[7.5.6]{pp21}.
\end{proof}

\begin{rmk}[Affine case] Taking $\mathfrak X$ to be affine, that is, $\mathfrak X\simeq \Spec A$, we see that the Hopf algebroid $(A_*,\Gamma_*)$ is the trivial Hopf algebroid $(A_*,A_*)$, giving us $\Comod_{\Gamma_*}\simeq \Mod_{A_*}$; Theorem \ref{main} then reduces to the case of Example \ref{affine}, showing that the former is indeed a generalization of the latter.
\end{rmk}

\section{Examples}\label{exa}

In this chapter, we apply Theorem \ref{main} to give a number of examples of nonconnective geometric spectral stacks for which the category of quasi-coherent sheaves admits an algebraic model.

\subsection{Projective space}\label{proj}

For our first example, we consider algebraic models for sheaves on a spectral analogue of projective spaces from classical algebraic geometry. These spectral projective spaces are constructed as spectral Deligne-Mumford stacks, a different type of stack than the nonconnective geometric spectral stacks considered in the rest of the paper. While neither type of stack is a subcategory of the other, we will show that in our case, both definitions are satisfied. We begin by recalling the definition of spectral Deligne-Mumford stacks \cite[Definition 1.4.4.2]{SAG}, along with the construction of these spectral projective spaces.

\begin{df} A \textit{nonconnective spectral Deligne-Mumford stack} is a spectrally ringed $\infty$-topos \cite[Definition 1.4.1.1]{SAG} $(\mathfrak{X},\sO_\mathfrak{X})$ for which there exists a collection of objects $U_\alpha \in \mathfrak{X}$ satisfying the following conditions:
\begin{enumerate}
    \item The objects $U_\alpha$ cover $\mathfrak{X}$; that is, the map \[\coprod_\alpha U_\alpha \to \mathbf{1}_{\mathfrak{X}}\] is an effective epimorphism, where $\mathbf{1}_{\mathfrak{X}}$ denotes the final object of $\mathfrak X$.
    \item For each index $\alpha$, there exists an $\mathbb{E}_\infty$-ring $A_\alpha$ and an equivalence of spectrally ringed $\infty$-topoi \[\left(\mathfrak{X}_{/U_\alpha},\sO_\mathfrak{X}\big{|}_{U_\alpha}\right)\simeq \Spet A_\alpha.\]
\end{enumerate}

Here $\Spet$ refers to the right adjoint to the global sections functor \[ \Gamma:\infty\mathrm{Top}^\mathrm{sHen}_\mathrm{CAlg}\to \mathrm{CAlg}^\op \] on the category of strictly Henselian spectrally ringed $\infty$-topoi \cite[Definition 1.4.2.1]{SAG}, an \'etale site-analog of the usual $\Spec$ functor \cite[Proposition 1.4.2.3]{SAG}.

We let $\mathrm{SpDM^{nc}}$ denote the full subcategory of $\infty\mathrm{Top}^\mathrm{sHen}_\mathrm{CAlg}$ spanned by the nonconnective
spectral Deligne-Mumford stacks.
A \textit{spectral Deligne-Mumford stack} is a nonconnective spectral Deligne-Mumford stack $(\mathfrak{X},\sO_\mathfrak{X})$ for which the structure sheaf $\sO_\mathfrak{X}$ is connective. We let $\mathrm{SpDM}$ denote the full subcategory of $\mathrm{SpDM^{nc}}$ spanned by the spectral Deligne-Mumford stacks.
\end{df}

It is worth noting that the definition above varies significantly from our definition of nonconnective geometric spectral stacks, which is based on the \textit{functor of points} approach to (spectral) algebraic geometry; this definition relies on the spectral analog of the ``ringed spaces" approach to algebraic geometry, i.e. that of \textit{spectrally ringed $\infty$-topoi}. 

Recall the following definition which will be used in our construction of spectral projective spaces below.

\begin{df}
For a discrete commutative monoid $X$ and an $\mathbb{E}_{\infty}$-ring $R$, denote by $R[X]:= R\otimes \Sigma_+^\infty X$ the \textit{monoid algebra of $X$ over $R$} as defined in \cite[5.4.1.1]{SAG}. Note also that $R[X]$ is flat as an $R$-algebra, giving us an isomorphism \[\pi_*R[X]\cong \pi_0R[X]\otimes_{R_0}R_*,\] and note that $\pi_0R[X]$ can be identified with the classical monoid algebra $(\pi_0R)[X]$.\end{df}

We recall the following definition from \cite[5.4.1.3]{SAG}:

\begin{df}[Flat projective space] Let $[n]$ denote the set $\{0<1<\cdots<n\}$, and let $P^\circ([n])\subset P([n])$ denote the subset of the power set of $[n]$ given by all of the non-empty subsets of $[n]$. For each subset $I\subseteq [n]$, let $M_I$ denote the subset of $\mathbb{Z}^{n+1}$ consisting of all tuples $(k_0,\dots, k_n)$ satisfying $k_0+\cdots+k_n=0$ and $k_i\geq 0$ for $i\notin I$. Then $M_I$ is a commutative monoid, and the assignment $I\mapsto M_I$ is functorial. As above, for $R$ a connective $\mathbb{E}_\infty$-ring, let $R[M_I]$ denote the monoid algebra associated to $M_I$ over $R$. Note that the association $I\mapsto R[M_I]$ is a functor $P([n])\to \mathrm{CAlg}^\mathrm{cn}_R$ from the power set of $[n]$ to the category of connective, commutative $R$-algebras. Composing with the \'etale spectrum functor, we get a functor valued in the category of spectral Deligne-Mumford stacks given by $I\mapsto \Spet R[M_I]$. We define the \textit{projective space of dimension n over R}, $\mathbb{P}^n_R$, as the colimit \[ \mathbb{P}^n_R := \varinjlim_{I\in P^\circ([n])^{\op}} \Spet R[M_I],\] formed in the category $\mathrm{SpDM}$ of spectral Deligne-Mumford stacks as defined above. 
\end{df}

It is worth noting that there are two non-equivalent definitions of projective spaces in spectral algebraic geometry: the flat projective spaces defined above, and smooth projective space, which we will not consider in the present work. 

We will now show that the flat projective spaces defined above are also nonconnective geometric spectral stacks.

\begin{lem}
    Let $R$ be a connective $\mathbb{E}_\infty$-ring, and let $\mathbb{P}^n_R$ be the flat projective space of dimension $n$ over $R$ as defined above. Then $\mathbb{P}^n_R$ satisfies the conditions of Definition \ref{ncgeo}; that is, $\mathbb{P}^n_R$ is a nonconnective geometric spectral stack.
\end{lem}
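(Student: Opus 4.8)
The plan is to verify the three conditions of Definition \ref{ncgeo} in turn for $\P^n_R$, viewing it as a functor $\mathrm{CAlg}\to\S$ via the functor of points it represents. The key structural input is the standard affine cover of projective space by the charts $\Spet R[M_{\{i\}}]$ for $i\in[n]$, together with the fact that $R[M_I]$ is flat over $R$, so that all the rings in sight are flat $R$-algebras whose homotopy is controlled by their $\pi_0$ and $R_*$.

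\textbf{Step 1: fpqc descent.} First I would note that $\P^n_R$, being defined as a colimit in $\mathrm{SpDM}$ of representable functors $\Spet R[M_I]$, is in particular an fpqc sheaf: affine spectral Deligne--Mumford stacks satisfy fpqc descent by \cite[D.6.3.5]{SAG} (or the faithfully flat descent results of \cite{SAG}), and a colimit of sheaves taken in the $\infty$-topos of fpqc sheaves is again a sheaf. One must be slightly careful that the colimit defining $\P^n_R$ agrees when formed in $\mathrm{SpDM}$ versus when formed in sheaves, but this is exactly the content of the gluing construction in \cite[5.4.1.3]{SAG}: the $\Spet R[M_I]$ glue along open immersions, so the colimit is computed correctly in sheaves.

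\textbf{Step 2: affine diagonal.} Next I would check that $\Delta\colon\P^n_R\to\P^n_R\times\P^n_R$ is affine. This follows from the separatedness of projective space: the charts $U_i=\Spet R[M_{\{i\}}]$ form an affine cover, and for each pair $(i,j)$ the intersection $U_i\cap U_j=\Spet R[M_{\{i,j\}}]$ is again affine, with $R[M_{\{i,j\}}]$ generated over $R[M_{\{i\}}]$ (and over $R[M_{\{j\}}]$) by inverting a single element. Since affineness of the diagonal can be checked on an affine cover of the target, and the pullback of $\Delta$ along $U_i\times U_j\to\P^n_R\times\P^n_R$ is the diagonal-type map $U_i\cap U_j\to U_i\times U_j$ which is a closed-into-affine hence affine morphism, we conclude $\Delta$ is affine. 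Concretely this reduces to the statement that $R[M_{\{i,j\}}]\simeq R[M_{\{i\}}]\otimes_{R[M_{[n]}]\text{-type ring}}R[M_{\{j\}}]$, a monoid-algebra computation.

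\textbf{Step 3: faithfully flat affine cover.} Finally I would produce the cover: take $A=\prod_{i=0}^n R[M_{\{i\}}]$, so that $\Spec A=\coprod_i \Spet R[M_{\{i\}}]\to\P^n_R$. Each chart map $\Spet R[M_{\{i\}}]\to\P^n_R$ is an open immersion, the charts jointly cover $\P^n_R$, and each $R[M_{\{i\}}]$ is flat over $R$ and hence (one checks on $\pi_0$, where it is the classical statement that the standard charts cover classical $\P^n_{\pi_0 R}$, combined with the flatness isomorphism $\pi_*R[M_{\{i\}}]\cong\pi_0R[M_{\{i\}}]\otimes_{\pi_0R}R_*$) the total map is faithfully flat. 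This gives condition (3). I would also remark that this identifies the associated coalgebra $\Gamma$ of Section \ref{adaptnonconnect} with the global sections of $\coprod_{i,j}\Spet R[M_{\{i,j\}}]$, which is useful for the subsequent application of Theorem \ref{main}.

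The main obstacle I anticipate is purely bookkeeping rather than conceptual: one must carefully match the ``ringed-$\infty$-topos'' description of $\P^n_R$ as a spectral Deligne--Mumford stack with its functor-of-points incarnation as an fpqc sheaf $\mathrm{CAlg}\to\S$, since Definition \ref{ncgeo} lives in the functor-of-points world while the definition of $\P^n_R$ lives in the ringed-topos world. The bridge is the fully faithful functor $\mathrm{SpDM}^{\mathrm{nc}}\hookrightarrow\Fun(\mathrm{CAlg},\S)$ of \cite[1.6.4]{SAG} and the fact that it preserves the relevant colimits; once that identification is in place, conditions (1)--(3) are essentially the separatedness and the standard affine cover of projective space, transported verbatim from the classical case via flatness of the monoid algebras.
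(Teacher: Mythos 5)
Your proposal is essentially correct, but it follows a different route than the paper. You verify the three conditions of Definition \ref{ncgeo} directly (fpqc descent via gluing sheaves along open immersions, affine diagonal checked on the affine cover of the product, and the faithfully flat cover $\coprod_i \Spet R[M_{\{i\}}]$), whereas the paper instead shows that $\mathbb{P}^n_R$ is a geometric stack in Lurie's sense: it cites \cite[5.4.1.7]{SAG} for the finite affine open cover, deduces quasi-compactness, checks affineness of the diagonal by the same pullback computation $\Spet R[M_{\{i\}}]\times_{\mathbb{P}^n_R}\Spet R[M_{\{j\}}]\simeq \Spet R[M_{\{i,j\}}]$ that you use, deduces quasi-separatedness from the affine (hence quasi-compact) diagonal, invokes \cite[9.1.4.6, 9.3.0.2]{SAG} to conclude quasi-geometricity, and finally transfers to Definition \ref{ncgeo} via \cite[Variant 1.3.3]{rok1}. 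The shared mathematical core is identical (the chart intersections are affine and finitely many charts cover); your version is more self-contained relative to the definition actually being verified, while the paper's version outsources the sheaf condition and the connective-to-nonconnective comparison to the cited results. Two small corrections to your write-up: in Step 3, flatness of $R[M_{\{i\}}]$ over $R$ is irrelevant to faithful flatness of the cover --- what matters is exactly what you also say, namely that each chart is an open immersion (hence flat over $\mathbb{P}^n_R$) and the charts are jointly surjective, which can be checked on $\pi_0$; and in Step 2 the map $U_i\cap U_j\to U_i\times U_j$ need not be described as a closed immersion --- affineness of the source is all that is needed, since a map between affines is automatically affine. Finally, the connective-versus-nonconnective functor-of-points bookkeeping you flag is a real point; the paper resolves it by citing \cite[Variant 1.3.3]{rok1} rather than \cite[1.6.4]{SAG}, and your argument would be cleanest if it did the same.
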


\begin{proof}
    In order to show that $\mathbb{P}^n_R$ is a \textit{geometric stack} in the sense of \cite[Definition 9.3.0.1]{SAG}, we need to show by \cite[Remark 9.3.0.2]{SAG} that it is \textit{quasi-geometric} and the diagonal $\delta:\mathbb{P}^n_R\to \mathbb{P}^n_R\times \mathbb{P}^n_R$ is affine. 
    
    By \cite[Proposition 5.4.1.7]{SAG}, we know that $\mathbb{P}^n_R$ is a spectral algebraic space, and it admits an open cover by a finite number of affines \begin{equation}\coprod_{0\leq i\leq n}\Spet R[M_{\{i\}}]\to \mathbb{P}^n_{R}.\label{cover}\end{equation} In order to show that $\mathbb{P}^n_{R}$ is quasi-geometric, it will suffice by \cite[Corollary 9.1.4.6]{SAG} to show that it is quasi-compact and quasi-separated. Since each affine is quasi-compact, we have an open cover by finitely many quasi-compact objects, meaning $\mathbb{P}^n_{R}$ is quasi-compact. 
    
    To see that the diagonal is affine, note that the fiber products \[ \Spet R[M_{\{i\}}]\times_{\mathbb{P}^n_{R}}\Spet R[M_{\{j\}}] \simeq \Spet R[M_{\{i,j\}}]  \] of the open cover \ref{cover} are affine. The cover (\ref{cover}) gives us an open cover \begin{equation} \coprod_{i,j}\Spet R[M_{\{i\}}]\times\Spet R[M_{\{j\}}]\to \mathbb{P}^n_{R}\times \mathbb{P}^n_{R}\label{cover2}\end{equation} of the diagonal by a finite number of affines. We have the following pullback square:
\[\begin{tikzcd}
	{\mathbb{P}^n_{R}\times_{\mathbb{P}^n_{R}\times \mathbb{P}^n_{R}}(\Spet R[M_{\{i\}}]\times\Spet R[M_{\{j\}}])} & {\Spet R[M_{\{i\}}]\times\Spet R[M_{\{j\}}]} \\
	{\mathbb{P}^n_{R}} & {\mathbb{P}^n_{R}\times\mathbb{P}^n_{R}}
	\arrow[from=1-1, to=1-2]
	\arrow[from=1-1, to=2-1]
	\arrow[from=1-2, to=2-2]
	\arrow[from=2-1, to=2-2]
\end{tikzcd}\]
We have \begin{align*}
\mathbb{P}^n_{R}\times_{\mathbb{P}^n_{R}\times \mathbb{P}^n_{R}}(\Spet R[M_{\{i\}}]\times\Spet R[M_{\{j\}}])&\simeq \Spet R[M_{\{i\}}]\times_{\mathbb{P}^n_{R}}\Spet R[M_{\{j\}}] \\
&\simeq \Spet R[M_{\{i,j\}}], 
\end{align*} meaning the affines in the cover (\ref{cover2}) pull back to affines. So, the diagonal map $\delta:\mathbb{P}^n_{R}\to \mathbb{P}^n_{R}\times \mathbb{P}^n_{R}$ is affine.

    Finally, since the diagonal $ \delta:\mathbb{P}^n_{R}\to \mathbb{P}^n_{R}\times \mathbb{P}^n_{R}$ is affine, we know that the inverse image $\delta^{-1}(U)$ of any affine open $U\in \mathbb{P}^n_{R}$ is affine and therefore quasi-compact. Then, since the inverse image of any affine open under $\delta$ is quasi-compact, the diagonal $\delta$ is also quasi-compact. Hence $\mathbb{P}^n_{R}$ is quasi-separated.

    So, $\mathbb{P}^n_{R}$ is a geometric stack in the sense of \cite{SAG}, meaning it also satisfies the conditions of Definition \ref{ncgeo}, i.e. it is a non-connective geometric spectral stack \cite[Variant 1.3.3]{rok1}.
\end{proof}

We will show that in certain cases, quasi-coherent sheaves on (flat) projective space over a connective, Noetherian $\mathbb{E}_\infty$-ring $R$ admit an algebraic model. In order to see this, we will require a few lemmas.

\begin{lem}\label{dim}
    Let $R$ be a Noetherian ring of finite Krull dimension $m$, and let $\mathbb{P}^n_R$ be the projective space of dimension $n$ over $R$. Then the abelian category $\QC(\mathbb{P}^n_R)$ has finite global dimension $d\leq 2(m+n)$. 
\end{lem}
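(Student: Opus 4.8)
\noindent\emph{Proof plan.} The plan is to estimate the global dimension of the abelian category $\QC(\mathbb{P}^n_R)$ — equivalently, the largest $k$ for which $\mathrm{Ext}^k(\mathcal F,\mathcal G)$ can be nonzero for quasi-coherent sheaves $\mathcal F,\mathcal G$ — by splitting the computation into a ``local'' and a ``global'' contribution via the local-to-global spectral sequence for $\mathrm{Ext}$, bounding each by $\dim\mathbb{P}^n_R=m+n$. First I would record the two inputs. The standard affine cover (\ref{cover}) identifies each chart $\Spet R[M_{\{i\}}]$ with $\Spec R[t_1,\dots,t_n]$, since $M_{\{i\}}\cong\mathbb{N}^n$ is free of rank $n$; by the Hilbert syzygy theorem this ring has global dimension $m+n$ (here one uses that $R$ has finite global dimension $m$, which agrees with its Krull dimension when $R$ is regular Noetherian — I suspect regularity is the intended hypothesis, since otherwise the chart rings, and hence $\QC(\mathbb{P}^n_R)$, can fail to have finite global dimension at all). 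It follows that for $\mathcal F$ coherent and $\mathcal G$ quasi-coherent the sheaf-Ext $\mathcal{E}xt^q_{\mathcal O}(\mathcal F,\mathcal G)$ is quasi-coherent (as $\mathbb{P}^n_R$ is Noetherian) with stalks $\mathrm{Ext}^q_{\mathcal O_{X,x}}(\mathcal F_x,\mathcal G_x)$, hence vanishes for $q>m+n$ because each $\mathcal O_{X,x}$ is regular local of dimension $\le m+n$. On the global side, $\mathbb{P}^n_R$ has Noetherian underlying space of Krull dimension $m+n$, so Grothendieck's vanishing theorem gives $H^p(\mathbb{P}^n_R,-)=0$ on quasi-coherent sheaves for $p>m+n$ (the separated cover by $n+1$ affines in fact forces vanishing already for $p>n$, but I will not need this sharper bound).

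I would then feed these into the local-to-global spectral sequence $E_2^{p,q}=H^p(\mathbb{P}^n_R,\mathcal{E}xt^q_{\mathcal O}(\mathcal F,\mathcal G))\Rightarrow \mathrm{Ext}^{p+q}_{\mathcal O}(\mathcal F,\mathcal G)$, valid for $\mathcal F$ coherent: its $E_2$-page is supported in the square $0\le p,q\le m+n$, so $\mathrm{Ext}^k_{\mathcal O}(\mathcal F,\mathcal G)=0$ for $k>2(m+n)$. Since $\mathbb{P}^n_R$ is Noetherian, with $\mathcal F$ coherent and $\mathcal G$ quasi-coherent, this agrees with the Ext computed inside $\QC(\mathbb{P}^n_R)$, because on a locally Noetherian scheme an injective object of the quasi-coherent category is still an injective $\mathcal O_X$-module. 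Finally, to remove the coherence hypothesis on $\mathcal F$: $\QC(\mathbb{P}^n_R)$ is a locally Noetherian Grothendieck category in which the coherent sheaves form a set of Noetherian generators, so a Baer-type criterion shows that the injective dimension of any object $\mathcal G$ is already detected by the groups $\mathrm{Ext}^\bullet(\mathcal F,\mathcal G)$ with $\mathcal F$ coherent. Hence every object of $\QC(\mathbb{P}^n_R)$ has injective dimension $\le 2(m+n)$, and the category has global dimension $d\le 2(m+n)$.

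The step I expect to be the main obstacle — and really the only genuinely non-formal one — is the passage from arbitrary quasi-coherent $\mathcal F$ to coherent $\mathcal F$, together with the identification of $\mathrm{Ext}$ in $\QC(\mathbb{P}^n_R)$ with $\mathrm{Ext}$ in all $\mathcal O_X$-modules; both are standard for Noetherian schemes but rely on real input (local Noetherianity of the quasi-coherent category, and injectivity of quasi-coherent injectives as $\mathcal O_X$-modules) that should be cited carefully. A secondary, already-mentioned subtlety is that finiteness of $d$ at all requires the base $R$ to be regular rather than merely Noetherian. As an alternative staying within the descent framework of the rest of the paper, one could instead use the Čech presentation of $\QC(\mathbb{P}^n_R)$ over the cover by the $n+1$ affine charts and compute $\mathrm{Ext}$ in the heart directly through the resulting spectral sequence indexed by the nonempty subsets of $[n]$; this repackages the same two estimates in terms of comodules.
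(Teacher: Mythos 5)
Your proposal follows essentially the same route as the paper: bound the sheaf-Ext terms by the dimension of the stalks, bound sheaf cohomology by Grothendieck vanishing on the $(m+n)$-dimensional space $\mathbb{P}^n_{R}$, and combine the two bounds in the local-to-global $\mathrm{Ext}$ spectral sequence to kill $\mathrm{Ext}^{k}$ for $k>2(m+n)$. The extra care you take is warranted rather than redundant: the stalkwise vanishing $\mathrm{Ext}^{q}_{\mathcal O_{x}}(\mathcal F_x,\mathcal G_x)=0$ for $q>m+n$ needs the local rings to have finite global dimension, which Noetherian plus finite Krull dimension alone does not give (already for $n=0$ and $R=k[\epsilon]/(\epsilon^{2})$ the category $\QC(\mathbb{P}^0_R)=\Mod_R$ has infinite global dimension), so your suggestion that regularity of $R$ is the intended hypothesis points at a real gap in the statement as written, which the paper's own proof passes over by equating ``dimension $\le m+n$'' with Ext-vanishing. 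Likewise, your reduction from arbitrary quasi-coherent $\mathcal F$ to coherent $\mathcal F$ (needed both for the stalk formula for $\mathcal{E}xt$ and for the spectral sequence) and the comparison of $\mathrm{Ext}$ in $\QC$ with $\mathrm{Ext}$ in $\mathcal O_X$-modules are steps the paper uses implicitly without justification; your Baer-type argument in the locally Noetherian Grothendieck category $\QC(\mathbb{P}^n_R)$ is a correct way to supply them.
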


\begin{proof}
    First, we note that by Grothendieck vanishing \cite[III.2.7]{hart}, we have \[ H^p(\mathbb{P}^n_{R_*}; F)=0 \] for all $p>m+n$ and all $F_1\in \QC(\mathbb{P}^n_{R_*})$. As in \cite[II.7.3.3]{gode}, we have the local-to-global $\mathrm{Ext}$-spectral sequence \begin{equation} E_2^{p,q} = H^p(\mathbb{P}^n_{R_*}; \mathcal{E}xt_{\mathscr O}^q(F',F''))\Rightarrow \mathrm{Ext}_{\mathscr O}^{p
    +q}(F',F''). \label{extss}\end{equation} To see that $\mathcal{E}xt_{\mathscr O}^q(F',F'')=0$ for $q>m+n$, it suffices to check this condition on stalks, where $\mathcal{E}xt_{\mathscr O}^i(F',F'')_x=\mathrm{Ext}^i_{\mathscr O_x}(F'_x,F''_x)$; the latter is $\mathrm{Ext}$ groups of modules over a ring of dimension $\leq m+n$ (as the dimension of the stalks is bounded above by the dimension of the space). So $\mathcal{E}xt_{\mathscr O}^i(F',F'')_x=0$ for $i>m+n$, meaning $\mathcal{E}xt_{\mathscr O}^q(F',F'')=0$ for $q>m+n$. So, applying these bounds to our spectral sequence \ref{extss}, we see that $\mathrm{Ext}_{\mathscr O}^{p
    +q}(F',F'')=0$ whenever $p+q>2(m+n)$, proving the claim.
\end{proof}

\begin{lem}\label{abequiv}
    Let $R$ be a connective, Noetherian $\mathbb{E}_\infty$-ring such that $R_*$ has finite Krull dimension, and let $\Gamma$ be the coalgebra corresponding to the geometric stack $\mathbb{P}^n_R$. Then we have an equivalence of categories \[\QC(\mathbb{P}^n_{R_*})\simeq \comod_{\Gamma_*}.\]
\end{lem}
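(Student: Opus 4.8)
The plan is to show that the Hopf algebroid $(A_*,\Gamma_*)$ attached to the cover $\Spec A\to\mathbb{P}^n_R$ is exactly the classical flat Hopf algebroid presenting the ordinary projective space $\mathbb{P}^n_{R_*}$ via its standard affine charts, and then to invoke the $1$-categorical counterpart of the comodule description of quasi-coherent sheaves. First I would unwind the cover: by the previous lemma the affine cover of $\mathbb{P}^n_R$ is $\coprod_{0\le i\le n}\Spet R[M_{\{i\}}]\to\mathbb{P}^n_R$, i.e.\ $\Spec A\to\mathbb{P}^n_R$ with $A=\prod_{i=0}^n R[M_{\{i\}}]$. Since each monoid algebra is flat over $R$ with $\pi_*R[M_I]\cong(\pi_0 R)[M_I]\otimes_{\pi_0 R}R_*$, and since $\pi_*$ commutes with finite products, this gives $A_*\cong\prod_i R_*[M_{\{i\}}]$. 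Likewise, the identification $\Spec A\times_{\mathbb{P}^n_R}\Spec A\simeq\coprod_{i,j}\Spet R[M_{\{i,j\}}]$ obtained in the proof of the previous lemma yields $\Gamma\simeq\prod_{i,j}R[M_{\{i,j\}}]$, hence $\Gamma_*\cong\prod_{i,j}R_*[M_{\{i,j\}}]$.

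Next I would identify $(A_*,\Gamma_*)$ with the classical data of $\mathbb{P}^n_{R_*}$. By \cite[5.4.1]{SAG}, the formation of $R[M_I]$ recovers the ordinary monoid algebra on $\pi_0$, so $\varinjlim_{I\in P^\circ([n])^\op}\Spec R_*[M_I]$ is the usual gluing of $\mathbb{P}^n$ over $R_*$ from its standard charts $U_i=\Spec R_*[M_{\{i\}}]$, with $U_i\cap U_j=\Spec R_*[M_{\{i,j\}}]$. Thus $\Spec A_*=\coprod_i U_i$ is the standard affine cover of $\mathbb{P}^n_{R_*}$ and $\Spec\Gamma_*=\Spec A_*\times_{\mathbb{P}^n_{R_*}}\Spec A_*$. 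Moreover, every structure map of the groupoid object $(A,\Gamma)$ --- source, target, unit, composition, inverse --- is induced by the functor $I\mapsto M_I$ together with the groupoid operations on the \v{C}ech nerve of the cover, and applying $\pi_*$ carries these to the corresponding maps of classical monoid algebras; hence $(\pi_*A,\pi_*\Gamma)=(A_*,\Gamma_*)$ is, as a Hopf algebroid, precisely the \v{C}ech groupoid of the standard cover of $\mathbb{P}^n_{R_*}$.

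I would then conclude by flat descent. The map $\Spec A_*\to\mathbb{P}^n_{R_*}$ is faithfully flat, affine, and quasi-compact, hence an fpqc cover, and since $\mathbb{P}^n_{R_*}$ is an algebraic space its category of quasi-coherent sheaves satisfies descent along it; by ordinary comonadic descent (the $1$-categorical analogue of Lemma \ref{equiv}) the descent category is the category of comodules over the associated Hopf algebroid. Therefore $\QC(\mathbb{P}^n_{R_*})\simeq\comod_{(A_*,\Gamma_*)}=\comod_{\Gamma_*}$, as claimed. The Noetherian and finite-Krull-dimension hypotheses are not strictly needed for this equivalence --- $\mathbb{P}^n$ over any base is an algebraic space, and $\QC$ satisfies fpqc descent on such --- but they are in force for the surrounding results (e.g.\ Lemma \ref{dim}) and ensure $\mathbb{P}^n_{R_*}$ is Noetherian of finite global dimension.

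The step I expect to be the main obstacle is the middle one: verifying carefully that applying $\pi_*$ to the derived groupoid presentation of $\mathbb{P}^n_R$ recovers not merely the correct rings $A_*$ and $\Gamma_*$ but the full classical Hopf-algebroid structure presenting $\mathbb{P}^n_{R_*}$ (compatibility of unit, counit, comultiplication, and antipode with $\pi_*$), and, relatedly, pinning down the correct graded --- equivalently $\mathbb{G}_m$-equivariant --- interpretation of $\QC(\mathbb{P}^n_{R_*})$ so that it matches the graded comodule category $\comod_{\Gamma_*}$. Both points are soft consequences of the flatness of monoid algebras and the naturality of $I\mapsto M_I$, but they carry the actual content of the identification.
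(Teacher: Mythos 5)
Your proposal is correct and follows essentially the same route as the paper: identify $\Gamma\simeq\prod_{i,j}R[M_{\{i,j\}}]$ from the standard affine cover, use flatness of monoid algebras and the fact that $\pi_*$ commutes with finite products to get $\Gamma_*\cong\prod_{i,j}R_*[M_{\{i,j\}}]$, and recognize $(A_*,\Gamma_*)$ as the \v{C}ech Hopf algebroid of the standard cover of $\mathbb{P}^n_{R_*}$, concluding by classical flat descent. You in fact spell out steps the paper leaves implicit (compatibility of the Hopf-algebroid structure maps with $\pi_*$ and the graded interpretation of $\QC(\mathbb{P}^n_{R_*})$), which is a faithful elaboration rather than a different argument.
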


\begin{proof}
First, note that by \cite[5.4.1.7]{SAG}, we have a faithfully flat affine cover \[\coprod_{0\leq i\leq n}\Spet R[M_{\{i\}}]\to \mathbb{P}^n_R,\] along with an analogous cover \[\coprod_{0\leq i\leq n}\Spet R_*[M_{\{i\}}]\to \mathbb{P}^n_{R_*}.\] The coalgebras associated to these covers are given by the global sections of \[\coprod_{0\leq i\leq n}\Spet R[M_{\{i\}}]\times_{\mathbb{P}^n_R}\coprod_{0\leq j\leq n}\Spet R[M_{\{j\}}]\simeq \coprod_{0\leq i,j\leq n}\Spet R[M_{\{i,j\}}]\] and \begin{equation}\label{gamstar}\coprod_{0\leq i\leq n}\Spet R_*[M_{\{i\}}]\times_{\mathbb{P}^n_{R_*}}\coprod_{0\leq j\leq n}\Spet R_*[M_{\{j\}}]\simeq \coprod_{0\leq i,j\leq n}\Spet R_*[M_{\{i,j\}}]\end{equation} respectively. So, we can see that $\Gamma\simeq \prod_{0\leq i,j\leq n} R[M_{\{i,j\}}]$; it will suffice to show that $\Gamma_*$ agrees with the Hopf algebroid \ref{gamstar}. In order to see this, note that homotopy groups of ring spectra are computed in spectra, where finite products and coproducts agree, meaning they commute with finite products. So, we have \[\Gamma_*\cong \prod_{0\leq i,j\leq n} R_*[M_{\{i,j\}}]\] as needed.
\end{proof}

\begin{lem}\label{deg}
    Let $R$ be a connective $\mathbb{E}_\infty$-ring, and assume $R_*$ is concentrated in degrees divisible by $q+1$. Then $\Gamma_*\cong \prod_{0\leq i,j\leq n} R_*[M_{\{i,j\}}]$ is concentrated in degrees divisible by $q+1$.
\end{lem}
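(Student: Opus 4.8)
The plan is to reduce the statement to the flatness of the monoid algebras $R[M_{\{i,j\}}]$ over $R$, which was recorded in the definition of the monoid algebra, together with the observation that finite products preserve concentration in a fixed set of degrees.

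First I would note that for each pair $0 \le i,j \le n$, flatness of $R[M_{\{i,j\}}]$ over $R$ gives the isomorphism of graded $\pi_0 R$-modules $\pi_\ast R[M_{\{i,j\}}] \cong \pi_0 R[M_{\{i,j\}}] \otimes_{\pi_0 R} R_\ast$, where $\pi_0 R[M_{\{i,j\}}]$ is the classical monoid algebra $(\pi_0 R)[M_{\{i,j\}}]$, an object concentrated in degree $0$. Since this tensor factor has no grading of its own, in each degree $k$ we obtain $\pi_k R[M_{\{i,j\}}] \cong (\pi_0 R)[M_{\{i,j\}}] \otimes_{\pi_0 R} \pi_k R$; in particular, if $\pi_k R = 0$ then $\pi_k R[M_{\{i,j\}}] = 0$. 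By hypothesis $R_\ast$ is concentrated in degrees divisible by $q+1$, so each $R[M_{\{i,j\}}]$ has homotopy concentrated in degrees divisible by $q+1$ as well.

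Next I would invoke (as in Lemma \ref{abequiv}) the fact that homotopy groups of ring spectra are computed in spectra, where finite products and finite coproducts coincide, to conclude $\Gamma_\ast \cong \prod_{0 \le i,j \le n} \pi_\ast R[M_{\{i,j\}}]$; a finite product of graded abelian groups each concentrated in degrees divisible by $q+1$ is again concentrated in degrees divisible by $q+1$, which finishes the proof. I do not expect any genuine obstacle here: the only point requiring care is tracking the grading through the flatness isomorphism, i.e. verifying that tensoring the ungraded ring $(\pi_0 R)[M_{\{i,j\}}]$ against $R_\ast$ over $R_0$ leaves the grading, and hence the $(q+1)$-divisibility condition, untouched. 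Once that bookkeeping is in place, the statement follows formally.
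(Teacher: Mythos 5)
Your proposal is correct and takes essentially the same route as the paper: both arguments rest on the flatness isomorphism $\pi_*R[M_{\{i,j\}}]\cong \pi_0R[M_{\{i,j\}}]\otimes_{R_0}R_*$ (recorded with the definition of the monoid algebra) together with the fact that homotopy groups commute with the finite product defining $\Gamma$. The only cosmetic differences are the order of operations --- you apply flatness termwise and then take the product, while the paper forms $\Gamma_0$ first and then uses flatness of $\Gamma$ over $R$ --- and the paper's explicit identification of $\pi_0R[M_{\{i,j\}}]$ inside a Laurent polynomial algebra, a detail your argument can safely omit since $\pi_0$ contributes only in homotopy degree zero.
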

\begin{proof}
    We recall the computation of $\pi_0R[M_{\{i,j\}}]$ \cite[Remark 5.4.1.4]{SAG}. The monomorphism of monoids \[M_{\{i,j\}}\hookrightarrow \mathbb{Z}^{n+1}\] induces a monomorphism of graded rings \[\pi_0R[M_{\{i,j\}}]\hookrightarrow \pi_0R[x_0^{\pm 1}, \dots, x_n^{\pm 1}]\] into a Laurent polynomial algebra. This monomorphism induces an isomorphism onto its image \[\pi_0R[M_{\{i,j\}}]\xrightarrow{\cong} \pi_0R[\tfrac{x_0}{x_i}, \dots, \tfrac{x_n}{x_i},\tfrac{x_0}{x_j}, \dots, \tfrac{x_n}{x_j}],\] the subalgebra of the Laurent polynomial algebra generated by $\tfrac{x_k}{x_i},\tfrac{x_k}{x_j}$ for all $0\leq k\leq n$. Since each of these generators has degree $1+(-1)=0$, we have that each $\pi_0R[M_{\{i,j\}}]$ is concentrated in degree $0$. Then, since $\pi_0$ commutes with finite products, we have that $\Gamma_0=\pi_0(\prod_{0\leq i,j\leq n} R[M_{\{i,j\}}])$ is concentrated in degree 0. Finally, since $\Gamma$ is flat over $R$, we have that \[\Gamma_*\cong R_*\otimes_{R_0} \Gamma_0.\] Each object on the left hand side is concentrated in degrees divisible by $q+1$, meaning $\Gamma_*$ is also concentrated in degrees divisible by $q+1$.
\end{proof}

We now prove the main theorem of this section: 
\begin{thm}
    Let $R$ be a connective, Noetherian $\mathbb{E}_\infty$-ring  and let $\mathbb{P}^n_R$ be the flat projective space of dimension $n$ over $R$. Assume that $R_*$ has finite Krull dimension $m$ and is concentrated in degrees divisible by $2(m+n)+1$. Then $\mathbb{P}^n_R$ satisfies the conditions of Theorem \ref{main}, giving us a canonical equivalence \[h_{2(m+n)+1-d}\QC(\mathbb{P}^n_R)\simeq h_{2(m+n)+1-d}D^{\mathrm{per}}(\comod_{\Gamma_*}), \]
    with $\Gamma$ defined as in the previous lemma, where $d$ is the cohomological dimension of $\QC(\mathbb{P}^n_{R_*})\simeq \comod_{\Gamma_*}$.
\end{thm}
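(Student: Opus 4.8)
The plan is to check that $\mathbb{P}^n_R$ meets the two numerical hypotheses of Theorem \ref{main} and then apply that theorem with $q = 2(m+n)$; all of the substantive work has already been carried out in the lemmas of this section, so what remains is bookkeeping.

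First I would record the setup. By the lemma above, $\mathbb{P}^n_R$ is a nonconnective geometric spectral stack, with faithfully flat affine cover $\coprod_{0\le i\le n}\Spet R[M_{\{i\}}]\to \mathbb{P}^n_R$ coming from \cite[5.4.1.7]{SAG}; by (the proof of) Lemma \ref{abequiv} the coalgebra attached to this cover is $\Gamma \simeq \prod_{0\le i,j\le n} R[M_{\{i,j\}}]$, with $\Gamma_* \cong \prod_{0\le i,j\le n} R_*[M_{\{i,j\}}]$. Since $R$ is a Noetherian $\mathbb{E}_\infty$-ring, $R_*$ is a Noetherian (graded) ring, of finite Krull dimension $m$ by hypothesis, so that both Lemma \ref{dim} and Lemma \ref{abequiv} are available.

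Next I would verify the hypotheses of Theorem \ref{main} with $q := 2(m+n)$. For condition (1), I apply Lemma \ref{deg}: since $R_*$ is concentrated in degrees divisible by $2(m+n)+1 = q+1$, the same is true of $\Gamma_*$. For condition (2), Lemma \ref{abequiv} gives an equivalence of abelian categories $\comod_{\Gamma_*} \simeq \QC(\mathbb{P}^n_{R_*})$, and Lemma \ref{dim}, applied to the Noetherian ring $R_*$ of Krull dimension $m$, shows that this category has finite cohomological dimension $d \le 2(m+n) = q$. Hence $d \le q$ and both conditions of Theorem \ref{main} hold.

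Finally I would invoke Theorem \ref{main}, which yields the canonical equivalence $h_{q+1-d}\QC(\mathbb{P}^n_R)\simeq h_{q+1-d}D^{\mathrm{per}}(\comod_{\Gamma_*})$; substituting $q+1 = 2(m+n)+1$ gives the asserted statement. There is no serious obstacle here: the only point warranting care is the translation of the hypotheses on the $\mathbb{E}_\infty$-ring $R$ into the statements about the ordinary graded ring $R_*$ needed to feed Lemmas \ref{dim} and \ref{abequiv} — namely that $R_*$ is Noetherian of Krull dimension $m$ — together with keeping the index $q = 2(m+n)$ consistent between the divisibility hypothesis and the cohomological-dimension bound.
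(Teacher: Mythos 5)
Your proposal is correct and follows essentially the same route as the paper: verify condition (1) of Theorem \ref{main} via Lemma \ref{deg}, verify condition (2) by combining Lemma \ref{abequiv} with Lemma \ref{dim} to bound the cohomological dimension of $\comod_{\Gamma_*}\simeq \QC(\mathbb{P}^n_{R_*})$ by $q=2(m+n)$, and then apply Theorem \ref{main}. Your extra remark about translating the Noetherian hypothesis on $R$ into statements about the graded ring $R_*$ is a reasonable point of care, but it does not change the argument, which matches the paper's.
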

\begin{proof}
    By Lemma \ref{dim} and Lemma \ref{abequiv}, we have that the cohomological dimension of $\comod_{\Gamma_*}\simeq \QC(\mathbb{P}^n_{R_*})$ is $d\leq q=2(m+n)$, satisfying the second condition of Theorem \ref{main}; the first condition, that $\Gamma_*$ is concentrated in degrees divisible by $q+1=2(m+n)+1$, is Lemma \ref{deg}. So, since both conditions of Theorem \ref{main} are satisfied, we have an algebraic model \[h_{2(m+n)+1-d}\QC(\mathbb{P}^n_R)\simeq h_{2(m+n)+1-d}D^{\mathrm{per}}(\comod_{\Gamma_*}). \]
\end{proof}



\subsection{Chromatic algebraicity}\label{chrom}

In this section, we give a new proof of chromatic algebraicity (Theorem 2.4 in \cite{pst1}, 
recalled below) that is more algebro-geometric in nature and does not rely directly on the theory of synthetic spectra. In \cite{pp21} the authors rewrite the proof of the theorem in the language of adapted homology theories, applying the algebraicity theorem to the homology theory from Example \ref{chex} (taking $R=E(n)$). For the sake of completeness, we recall a few points prior to presenting our proof.

\begin{df}
    Recall that the category of $E_n$-local spectra $L_{E_n}\mathrm{Sp}$ for $E_n$ any $p$-local, Landweber exact homology theory of height $n$ depends only on the choice of the height $n$ and prime $p$. We denote this category, which we call the \textit{chromatic localization of spectra at height $n$ and the prime $p$}, by $L_n\mathrm{Sp}:=L_{E_n}\mathrm{Sp}$. For example, $E_n$ could be the $p$-local Lubin-Tate theory of height $n$, or the $p$-local Johnson-Wilson theory of height $n$, $E(n)$.
\end{df}

We also recall the following construction analogous to constructions from stable homotopy theory and classical algebraic geometry:
\begin{df}
For a nonconnective geometric spectral stack $\mathfrak{X}$, denote by 
\[\mathfrak{X}_{(p)}:= \mathfrak{X}\times_{\Spec S} \Spec S_{(p)}\] the $(p)$-\textit{localization} of $\mathfrak{X}$. 
\end{df}

In \cite[Theorem 2.3.1]{rok1}, Gregoric constructs a nonconnective geometric spectral stack known as the \textit{moduli stack of oriented formal groups}, which we denote by $\mathcal M^{\mathrm{or}}_{\mathrm{FG}}$. While the full definition of the stack is outside the scope of this paper, we recall the following theorem \cite[Theorem 2(iii)]{rok2}:

\begin{thm}
        There exists a canonical filtration by open nonconnective spectral substacks on the $(p)$ - localization of the moduli stack of oriented formal groups \[\mathcal M^{\mathrm{or},\leq 1}_{\mathrm{FG},(p)}\subseteq \cdots \subseteq \mathcal M^{\mathrm{or},\leq n}_{\mathrm{FG},(p)} \subseteq \cdots \subseteq \mathcal M^{\mathrm{or}}_{\mathrm{FG},(p)} \] such that for every finite height $1\leq n \leq \infty$, the functor $X\mapsto X\otimes \mathscr{O}_{\mathcal M^{\mathrm{or}}_{\mathrm{FG},(p)}}$ induces an equivalence of $\infty$-categories \[L_n\mathrm{Sp} \simeq \QC(\mathcal M^{\mathrm{or},\leq n}_{\mathrm{FG},(p)}) \] between the height $n$ chromatic localization of spectra at the prime $p$ and quasi-coherent sheaves on $\mathcal M^{\mathrm{or},\leq n}_{\mathrm{FG},(p)}$.
\end{thm}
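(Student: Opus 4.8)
This theorem is recalled in the excerpt as \cite[Theorem 2(iii)]{rok2}, so strictly the ``proof'' is a reference; what follows is how one would reconstruct it. I would build on three inputs. First, Gregorič's theorem \cite{rok1} that $\QC(\mathcal M^{\mathrm{or}}_{\mathrm{FG}})\simeq \mathrm{Sp}$, under which the faithfully flat affine cover $\Spec MP\to \mathcal M^{\mathrm{or}}_{\mathrm{FG}}$ by periodic complex bordism corresponds to the base change functor $\mathrm{Sp}\to \Mod_{MP}$. Second, the fact that a nonconnective geometric spectral stack has the same underlying topological space --- hence the same poset of open substacks --- as its classical truncation; the classical truncation of $\mathcal M^{\mathrm{or}}_{\mathrm{FG},(p)}$ has underlying space that of the $p$-local moduli stack of formal groups $\mathcal M_{\mathrm{FG},(p)}$. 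Third, the Morava-Landweber height stratification of $\mathcal M_{\mathrm{FG},(p)}$, in which the ``height $\leq n$'' locus $\mathcal M^{\leq n}_{\mathrm{FG},(p)}$ is the open complement of the reduced closed substack cut out by the invariant ideal $(p,v_1,\dots,v_n)$, and on which the Johnson-Wilson spectrum $E(n)$, Landweber exact of height $n$, defines a faithfully flat affine cover $\Spec E(n)\to \mathcal M^{\leq n}_{\mathrm{FG},(p)}$ (essentially the statement that $E(n)$-homology is ``restriction to the height $\leq n$ locus'').

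Granting these, the plan is: (i) define $\mathcal M^{\mathrm{or},\leq n}_{\mathrm{FG},(p)}\hookrightarrow \mathcal M^{\mathrm{or}}_{\mathrm{FG},(p)}$ to be the open nonconnective spectral substack corresponding to $\mathcal M^{\leq n}_{\mathrm{FG},(p)}$ under the second input, so that the inclusions $\mathcal M^{\leq 1}_{\mathrm{FG},(p)}\subseteq \mathcal M^{\leq 2}_{\mathrm{FG},(p)}\subseteq\cdots$ yield the asserted filtration; (ii) note that the classifying map $\Spec E(n)\to \mathcal M^{\mathrm{or}}_{\mathrm{FG},(p)}$ is flat with image exactly $\mathcal M^{\leq n}_{\mathrm{FG},(p)}$, hence restricts to a faithfully flat affine cover of $\mathcal M^{\mathrm{or},\leq n}_{\mathrm{FG},(p)}$ (so the latter is indeed a nonconnective geometric spectral stack); (iii) apply comonadic descent along this cover --- exactly the argument of Lemma \ref{equiv} --- to identify $\QC(\mathcal M^{\mathrm{or},\leq n}_{\mathrm{FG},(p)})$ with the totalization $\mathrm{Tot}\big(\Mod_{E(n)^{\wedge\bullet+1}}\big)$ of the $E(n)$-cobar complex; (iv) identify this totalization with the category $L_{E(n)}\mathrm{Sp}=L_n\mathrm{Sp}$ of $E(n)$-local spectra, using convergence of the $E(n)$-local $E(n)$-Adams spectral sequence (equivalently, that the $E(n)$-localization of the sphere is $E(n)$-nilpotent complete); and (v) check that the composite ``pull back along $\mathcal M^{\mathrm{or}}_{\mathrm{FG},(p)}\to \Spec S_{(p)}$, then restrict to the open substack'' --- which is the functor $X\mapsto X\otimes\mathscr O_{\mathcal M^{\mathrm{or}}_{\mathrm{FG},(p)}}$ of the statement --- realizes this equivalence and is natural in $n$, producing the compatible tower of localizations $L_1\to L_2\to\cdots$, with the $n=\infty$ stage recovering $\QC(\mathcal M^{\mathrm{or}}_{\mathrm{FG},(p)})\simeq \mathrm{Sp}_{(p)}$.

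The main obstacle is step (iv): the claim that the naive totalization of the $E(n)$-cobar complex is genuinely $L_n\mathrm{Sp}$ rather than a larger nilpotent completion. This is the assertion that $S\to E(n)$ is of effective descent inside the $E(n)$-local category, and its honest proof rests on the nilpotence and smash-product theorems of Devinatz-Hopkins-Smith and Hopkins-Ravenel (equivalently, chromatic convergence). The remaining ingredients --- Gregorič's equivalence $\QC(\mathcal M^{\mathrm{or}}_{\mathrm{FG}})\simeq \mathrm{Sp}$, the identification of open substacks of a spectral stack with those of its classical truncation, and Landweber exactness --- are imported from \cite{rok1} or are formal consequences of the descent machinery developed in Section \ref{adaptnonconnect}. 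Accordingly, in a self-contained account one cites \cite{rok2} for step (iv) and reconstructs only the formal scaffolding around it.
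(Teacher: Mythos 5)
The paper offers no proof of this statement at all: it is imported verbatim from Gregoric \cite[Theorem 2(iii)]{rok2}, so the only ``proof'' to compare against is a citation, and you are right to frame your answer as a reconstruction of Gregoric's argument. At finite height your scaffolding does match that argument in its essentials: the substacks $\mathcal M^{\mathrm{or},\leq n}_{\mathrm{FG},(p)}$ are the open loci determined by the classical height stratification (cut out by the invariant ideals $(p,v_1,\dots,v_n)$ in $\pi_0 MP \cong$ the Lazard ring, phrased via the flat cover rather than a ``classical truncation,'' which is a delicate notion for nonconnective stacks); a $p$-local Landweber-exact height-$n$ theory gives a faithfully flat affine cover $\Spec E_n \to \mathcal M^{\mathrm{or},\leq n}_{\mathrm{FG},(p)}$; descent as in Lemma \ref{equiv} identifies $\QC$ of this open substack with the totalization of the $E_n$-cobar complex; and the identification of that totalization with $L_n\mathrm{Sp}$ is exactly the Hopkins--Ravenel smash product theorem (descendability of $S \to E_n$ after $L_n$-localization), which you correctly isolate as the one nonformal input.

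However, your first ``input'' and the last clause of step (v) are not correct and are not in \cite{rok1,rok2}. Gregoric does not prove $\QC(\mathcal M^{\mathrm{or}}_{\mathrm{FG}})\simeq \mathrm{Sp}$; what he proves is that $\mathscr O(\mathcal M^{\mathrm{or}}_{\mathrm{FG}})\simeq S$ and that $\Spec MP \to \mathcal M^{\mathrm{or}}_{\mathrm{FG}}$ is a faithfully flat cover, so that $\QC(\mathcal M^{\mathrm{or}}_{\mathrm{FG}})$ is the totalization $\mathrm{Tot}\bigl(\Mod_{MP^{\otimes \bullet+1}}\bigr)$. The comparison functor $X\mapsto X\otimes \mathscr O$ from $\mathrm{Sp}$ to this totalization is \emph{not} an equivalence, precisely because $S\to MP$ is not descendable; this failure is the reason the theorem is stated only for finite heights, where the smash product theorem supplies descendability. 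For the same reason the $n=\infty$ claim that one recovers $\QC(\mathcal M^{\mathrm{or}}_{\mathrm{FG},(p)})\simeq \mathrm{Sp}_{(p)}$ does not hold (and is not asserted by the theorem). Neither error is needed for the finite-height statement: steps (i)--(iv) go through using only the existence of the functor $X\mapsto X\otimes\mathscr O_{\mathcal M^{\mathrm{or}}_{\mathrm{FG},(p)}}$ and its restriction to the opens, not any global equivalence with $\mathrm{Sp}$, so you should simply delete the global claim rather than route the argument through it.
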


We now give our proof of the chromatic algebraicity theorem of Pstragowski (and Patchkoria-Pstragowski, \cite[8.3.1]{pp21}):
\begin{thm}[Chromatic algebraicity]
    Let $E(n)$ be the Johnson-Wilson theory of height $n$ at a fixed prime $p$, and assume $2p-2>n^2+n$. Then, there exists a canonical equivalence \[ h_{2p-2-n^2-n}L_n\mathrm{Sp} \simeq h_{2p-2-n^2-n}D^{\mathrm{per}}(\mathrm{Comod}_{E(n)_*E(n)}) \] between the homotopy $(2p-2-n^2-n)$-categories of $E(n)$-local spectra and the periodic derived category of $E(n)_*E(n)$-comodules.
\end{thm}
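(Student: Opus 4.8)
The plan is to deduce this from Theorem \ref{main} applied to the nonconnective geometric spectral stack $\mathfrak X = \mathcal M^{\mathrm{or},\leq n}_{\mathrm{FG},(p)}$, using the equivalence $L_n\mathrm{Sp} \simeq \QC(\mathcal M^{\mathrm{or},\leq n}_{\mathrm{FG},(p)})$ recalled just above. To invoke Theorem \ref{main} I need a faithfully flat affine cover $\Spec(A)\to \mathcal M^{\mathrm{or},\leq n}_{\mathrm{FG},(p)}$ whose associated Hopf algebroid $(A_*,\Gamma_*)$ is (1) concentrated in degrees divisible by $q+1$, and (2) such that $\comod_{\Gamma_*}$ has cohomological dimension $d\le q$; the target $q+1-d$-category will then match the statement once I identify $(A_*,\Gamma_*)$ with $(E(n)_*,E(n)_*E(n))$ and set $q+1 = 2p-2$, $d = n^2+n$.

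First I would produce the cover. The stack $\mathcal M^{\mathrm{or},\leq n}_{\mathrm{FG},(p)}$ receives a canonical flat cover by $\Spec E(n)$ — this is essentially the content of Gregoric's identification, since $E(n)$ is the Landweber-exact theory classifying the universal oriented formal group over the height-$\le n$ open substack, and $\Spec E(n)\to \mathcal M^{\mathrm{or},\leq n}_{\mathrm{FG},(p)}$ is faithfully flat and affine. Taking global sections of the Čech nerve then gives $A = E(n)$ and $\Gamma = \mathscr O(\Spec E(n)\times_{\mathcal M^{\mathrm{or},\leq n}_{\mathrm{FG},(p)}}\Spec E(n))$, and the collapse of the Künneth spectral sequence (flatness of $\Gamma$ over $A$, as in the discussion before Theorem \ref{homtheory}) yields $\Gamma_* \cong E(n)_*E(n)$ as a Hopf algebroid over $A_* = E(n)_*$. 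Here I would cite the standard computation of $E(n)_*E(n)$ and the fact that it is flat over $E(n)_*$ (Adams-type behavior of $E(n)$, cf. Example \ref{chex}).

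Next I verify the two numerical hypotheses. Condition (1): $E(n)_* = \mathbb Z_{(p)}[v_1,\dots,v_{n-1},v_n^{\pm 1}]$ with $|v_i| = 2(p^i-1)$, so every generator sits in degree divisible by $2(p-1) = 2p-2$; since $E(n)_*E(n)$ is obtained from $E(n)_*$ by adjoining further classes all of whose degrees are again divisible by $2p-2$ (they are built from the coaction on the $v_i$ and from $E(n)_0E(n)$ tensored up, mirroring Lemma \ref{deg}), $\Gamma_* = E(n)_*E(n)$ is concentrated in degrees divisible by $q+1 := 2p-2$. Condition (2): the cohomological dimension of $\comod_{E(n)_*E(n)}$ is $n^2+n$ — this is the well-known vanishing line for the $E(n)$-based Adams spectral sequence / Morava stabilizer cohomology, and I would cite it from the chromatic literature (it is exactly the input used in \cite{pp21,pst1}). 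So $d = n^2+n$, and the hypothesis $q\ge d$ becomes $2p-2 > n^2+n$, i.e. $2p-2 \ge n^2+n+1$, which is precisely the assumption $2p-2 > n^2+n$ in the statement.

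Finally, applying Theorem \ref{main} gives $h_{q+1-d}\QC(\mathcal M^{\mathrm{or},\leq n}_{\mathrm{FG},(p)}) \simeq h_{q+1-d}D^{\mathrm{per}}(\comod_{E(n)_*E(n)})$ with $q+1-d = 2p-2-n^2-n$; composing with the equivalence $L_n\mathrm{Sp}\simeq \QC(\mathcal M^{\mathrm{or},\leq n}_{\mathrm{FG},(p)})$ on the left yields the claim. The main obstacle I anticipate is not the numerology but the first step: carefully justifying that $\Spec E(n)\to \mathcal M^{\mathrm{or},\leq n}_{\mathrm{FG},(p)}$ is a faithfully flat affine cover in the nonconnective geometric sense and that the resulting $\Gamma_*$ is genuinely the classical Hopf algebroid $E(n)_*E(n)$ — this requires knowing Gregoric's construction well enough to extract the cover and to match the comodule categories, rather than anything hard about Franke's theorem itself. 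A secondary point to be careful about is that the cohomological-dimension statement for $E(n)_*E(n)$-comodules is about the abelian category $\comod_{E(n)_*E(n)}$ and not merely about $\mathrm{Ext}$ over the ring $E(n)_*$, but this is standard.
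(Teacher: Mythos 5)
Your proposal is correct and follows essentially the same route as the paper: invoke Gregoric's equivalence $L_n\mathrm{Sp}\simeq \QC(\mathcal M^{\mathrm{or},\leq n}_{\mathrm{FG},(p)})$, identify the associated Hopf algebroid with $(E(n)_*,E(n)_*E(n))$, check that $E(n)_*E(n)$ is concentrated in degrees divisible by $q+1=2p-2$ and that $\comod_{E(n)_*E(n)}$ has cohomological dimension $d=n^2+n$, and apply Theorem \ref{main}. If anything, you are more explicit than the paper about justifying that $\Spec E(n)\to \mathcal M^{\mathrm{or},\leq n}_{\mathrm{FG},(p)}$ is a faithfully flat affine cover with $\Gamma_*\cong E(n)_*E(n)$, a point the paper passes over quickly.
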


\begin{proof}
By Theorem 2 in \cite{rok2}, we have an equivalence \[L_n\mathrm{Sp} \simeq \QC(\mathcal M^{\mathrm{or},\leq n}_{\mathrm{FG},(p)}) \] between the category of $E_n$-local spectra (for $E_n$ any Landweber exact homology theory of height $n$) and the category of quasi-coherent sheaves on the restriction of the moduli stack of oriented formal groups to those formal groups of height $\leq n$. Taking $E_n=E(n)$, the Johnson-Wilson theory of height $n$, when $2p-2>n^2+n$, we see that the conditions of Theorem \ref{main} are satisfied, as $E(n)$ is concentrated in degrees divisible by $q+1=2p-2$, and the category of $E(n)_*E(n)$-comodules is of homological dimension $d=n^2+n$ \cite[Theorem 2.4]{pst1}. So, we get a canonical equivalence \[ h_{2p-2-n^2-n}L_n\mathrm{Sp} \simeq h_{2p-2-n^2-n}D^{\mathrm{per}}(\mathrm{Comod}_{E(n)_*E(n)}) \] between the homotopy $(q+1-d)$-categories of $E(n)$-local spectra and the periodic derived category of $E(n)_*E(n)$-comodules.
\end{proof}






\bibliographystyle{abbrv}
\bibliography{algebraicmodelssag}{}

\end{document}